\numberwithin{equation}{section}
\newtheorem{maintheorem}{Theorem}
\newtheorem{theorem}{Theorem}[section]
\newtheorem*{theorem*}{Theorem}
\newtheorem{lemma}[theorem]{Lemma}
\newtheorem{claim}[theorem]{Claim}
\newtheorem{proposition}[theorem]{Proposition}
\newtheorem{corollary}[theorem]{Corollary}
\theoremstyle{definition}{

\newtheorem{definition}[theorem]{Definition}
\newtheorem*{definition*}{Definition}

\newtheorem*{question*}{Question}
\newtheorem*{example*}{Example}
\newtheorem*{examples*}{Examples}

\newtheorem*{remark*}{Remark}

}
\renewcommand{\P}{\mathbb{P}}
\newcommand{\E}{\mathbb{E}}
\DeclareMathOperator{\bP}{\mathbf{P}}
\DeclareMathOperator{\bE}{\mathbf{E}}
\def\Psub_#1{\P_{\! #1}}
\def\Pleft#1{\P\mkern-1mu\left(#1\right)}
\def\Pbig#1{\P\mkern-.5mu\bigl(#1\bigr)}
\def\Psubbig_#1#2{\Psub_{#1}\mkern-1.5mu\bigl(#2\bigr)}
\def\Psubbigg_#1#2{\Psub_{#1}\mkern-1.5mu\biggl(#2\biggr)}
\def\PBig#1{\P\mkern-.5mu\Bigl(#1\Bigr)}
\renewcommand{\P}{\mathbb{P}}
\newcommand{\cA}{{\mathcal A}}
\newcommand{\cC}{{\mathcal C}}
\newcommand{\cE}{{\mathcal E}}
\newcommand{\cF}{{\mathcal F}}
\newcommand{\cS}{{\mathcal S}}
\newcommand{\bR}{{\mathbf R}}
\newcommand{\tx}{\mathtt{tx}}
\newcommand{\red}{\textsc{red}}
\newcommand{\nice}{\textsc{nice}}
\newcommand{\exit}{\textsc{exit}}
\newcommand{\Bin}{\operatorname{Bin}}
\DeclareMathOperator{\dist}{dist}
\DeclareMathOperator{\cov}{Cov}
\renewcommand{\epsilon}{\varepsilon}
\newcommand{\given}{\;\big|\;}
\newcommand{\ind}{{\mathbbm{1}}}
\newcommand{\tmix}{t_\textsc{mix}}
\newcommand{\gap}{\ensuremath{\mathtt{gap}}}
\newcommand{\tv}{{\textsc{tv}}}
\newcommand{\dimH}{{\mathbf{d}}}
\newcommand{\bh}{{\mathbf{h}}}
 \definecolor{refkey}{gray}{.5}
 \definecolor{labelkey}{gray}{.5}
\definecolor{light}{gray}{.9}
\begin{document}

\title{Comparing mixing times on sparse random graphs}
\author[A. Ben-Hamou]{Anna Ben-Hamou}
 \address{A. Ben-Hamou\hfill\break
Sorbonne Universit\'e, LPSM\\
4, place Jussieu\\ 75005 Paris, France.}
\email{anna.ben-hamou@upmc.fr}

\author[E. Lubetzky]{Eyal Lubetzky}
\address{E.\ Lubetzky\hfill\break
Courant Institute\\ New York University\\
251 Mercer Street\\ New York, NY 10012, USA.}
\email{eyal@courant.nyu.edu}

\author[Y. Peres]{Yuval Peres}
\address{Y.\ Peres\hfill\break
Microsoft Research\\ One Microsoft Way\\ Redmond, WA 98052, USA.}
\email{peres@microsoft.com}

%\keywords{Random graphs, Random walks, Nonbacktracking vs.\ Simple random walk, Mixing times of Markov chains}
%\subjclass[2010]{}

\begin{abstract}
It is natural to expect that nonbacktracking random walk will mix faster than simple random walks, but so far this has only been proved in regular graphs. To analyze typical irregular graphs, let $G$ be a random graph on $n$ vertices with minimum degree 3 and a degree distribution that has exponential tails.  We determine the precise worst-case mixing time for simple random walk on $G$, and show that, with high probability,
it exhibits cutoff at time $\bh^{-1} \log n$, where $\bh$ is the asymptotic entropy for simple random walk on a Galton--Watson tree that  approximates $G$ locally. (Previously this was only known for typical starting points.) Furthermore, we show this asymptotic mixing time is strictly larger than the mixing time of nonbacktracking walk, via a delicate comparison of entropies on the Galton--Watson tree.
\end{abstract}
\maketitle
\section{Introduction}

We study the mixing time of simple random walk (SRW) vs.\ the nonbacktracking random walk (NBRW) on sparse random graphs. It is natural to expect, as highlighted in~\cite{alon2007non} for the case of regular expander graphs, that forbidding the walk to backtrack (traverse an edge twice in a row) would allow the walk to mix faster.
It was thereafter shown in~\cite{LS10} that, on a typical random $d$-regular graph on $n$ vertices, both walks exhibit the \emph{cutoff phenomenon}---a sharp transition in the total-variation distance from equilibrium, dropping from near its maximum to near 0 over a negligible time period referred to as the cutoff window: SRW mixes at time $\frac{d}{d-2}\log_{d-1}n + O(\sqrt{\log n})$, as conjectured by Durrett~\cite{Durrett-RGD}, whereas the NBRW mixes faster, having cutoff already at time $\log_{d-1}n + O(1)$. Our goal here is to obtain analogous results for the irregular case.

The (worst-case) total-variation distance of a Markov chain with transition kernel $P$ and state space $\Omega$ from its stationary distribution $\pi$ is defined as
\[
d_\tv(t)= \max_{x_0\in\Omega}\left\|P^t(x_0,\cdot)-\pi\right\|_\tv = \max_{x_0\in\Omega}\sup_{A\subset\Omega}\left|P^t(x_0,A)-\pi(A)\right|\,,
\]
and, for $0<\epsilon<1$, its corresponding mixing time to within distance $\epsilon$ is given by
\[
\tmix(\epsilon) = \min\left\{ t : d_\tv(t)\leq\epsilon\right\}\,.
\]
(When addressing a prescribed initial state $x_0\in\Omega$ rather than the worst one, this definition is replaced by $\tmix^{(x_0)}(\epsilon)=\min\{ t : d_\tv^{(x_0)}(t)\leq \epsilon\}$ where $d_\tv^{(x_0)}=\|P^t(x_0,\cdot)-\pi\|_\tv$.)
The notion of cutoff, due to Aldous, Diaconis and Shahshahani~\cite{Aldous,AD,DiSh}, captures the dependence of $\tmix$ on the parameter $\epsilon$: a sequence of chains is said to exhibit cutoff if
$\lim_{|\Omega|\to\infty}[\tmix(\epsilon) / \tmix(\epsilon')]= 1$ for every fixed $0<\epsilon,\epsilon'<1$.

%Understanding the mixing properties of random walks is a matter of both theoretical and practical interest. Fast-mixing random walks provide a simple and useful method to sample vertices from a large network (see \cite{lovasz1993random} and \cite{cooper2011random} for surveys on the subject). In many applications, it may seem useless to allow the walk to backtrack, that is, to traverse the same edge twice in a row, in opposite direction, and one may guess that nonbacktracking random walks are, in a certain sense, faster to mix and to cover the graph. This fact was first highlighted in \cite{alon2007non}, who showed that, on regular expanders, nonbacktracking random walks (NBRW) have faster mixing rates than simple random walks (SRW). Then, in \cite{LS10}, the cutoff phenomenon was established on random regular graphs, both for the SRW and the NBRW, confirming that NBRW do mix faster.

Roughly put, the locally-treelike geometry of sparse random graphs makes the mixing behavior of random walk resemble that on trees. In the regular case, a walk on a regular tree, conditioned to be at a given distance from the root, is uniform on that level, and the analogous mixing time on a random $d$-regular graph coincides with the time $t$ at which this distance is about $\log_{d-1} n$ (so as to contain almost all vertices in its range). This corresponds to $t = \log_{d-1} n +O(1)$ for the NBRW, and a slowdown of the SRW by a factor of $d/(d-2)$ due to the reduced speed of random walk on a tree (with a coarser $O(\sqrt{\log n})$-window due to the normal fluctuations of its height), as in the results of~\cite{LS10}.

In the irregular case, however, the walk can have vastly different probabilities of traversing different paths, and mixing occurs when almost all vertices are---not only reachable by paths---but reachable by ones that are sufficiently probable. It might then be the case that the NBRW, allbeit faster to reach the leaves of a tree,  could potentially be ``trapped'' in a set of lower-probability paths, compared with the  SRW that retains a decent probability of backtracking and exploring more favorable parts of the tree.
It turns out that the effect of such traps is not strong enough to compensate for the slowdown of the SRW, and, even in the irregular case, the NBRW mixes faster.

%Recently, mixing times were precisely described for random walks on random graphs with prescribed degrees. \cite{BS} established worst-case cutoff for the NBRW, with precise estimates for its location and window (a slightly weaker version of this result was independently proved in \cite{BLPS}). As for the SRW, the cutoff location and window from a typical starting point were established by \cite{BLPS}. In this short note, we first show that the same cutoff holds for the worst starting point.

%\subsection{Setting and notations}
%\label{sec:setting}

%For a transition matrix $P$ on a state-space $\Omega$ with stationary distribution $\pi$, the total-variation distance between $P^t(x,\cdot)$ and $\pi$ is defined as
%\begin{eqnarray*}
%d_\tv^{(x)}(t)&:=& \|P^t(x,\cdot)-\pi(\cdot)\|_\tv\\
%&=& \max_{A\subset\Omega}|P^t(x,A)-\pi(A)|\, =\, \frac{1}{2}\sum_{y\in\Omega}|P^t(x,y)-\pi(y)|\, .
%\end{eqnarray*}
%Let $d_\tv(t):=\max_{x\in\Omega}d_\tv^{(x)}(t)$ and define the \emph{worst-case mixing time} as
%\begin{eqnarray*}
%\tmix(\varepsilon)&:=&\min\{t\geq 0,\, d_\tv(t)\leq\varepsilon\}\, .
%\end{eqnarray*}

Let $G=(V,E)$ be a random graph on $n$ vertices with vertex set $V$ and degree distribution $(p_k)_{k\geq 1}$; that is, the degree $D_x$ of each vertex $x\in V$ is independently sampled with $\P(D_x=k)=p_k$, conditioned on  $\sum_x D_x$ being even, and $G$ is thereafter generated by the configuration model. Let $Z$ be a random variable with distribution
\begin{eqnarray*}
\P(Z=k-1)&=&\frac{kp_{k}}{\sum_{\ell\geq 1}\ell p_\ell}\, ,
\end{eqnarray*}
consider a rooted Galton--Watson tree $(T,\rho)$ with offspring variable $Z$, and let $(X_t)$ and $(Y_t)$ be  SRW and NBRW, respectively, on $T$ started at $\rho$. The mixing times on $G$ can be expressed in terms of the asymptotic entropy of these walks on $T$ as follows. Let
\begin{align}\label{eq-hX-def}
\bh_X &\stackrel{\text{a.s.}}= \lim_{t\to\infty}\frac1t H\Big(\P_\rho(X_t\in\cdot \mid T)\Big)\quad\mbox{ and }\quad
\bh_Y\stackrel{\text{a.s.}}= \lim_{t\to\infty}\frac1t H\Big(\P_\rho(Y_t\in\cdot \mid T)\Big)\, ,
\end{align}
where the entropy $H(\mu_T)$ of a probability measure $\mu_T$ on the vertices of $T$ is given by
\begin{align*}
H\left(\mu_T\right)=-\sum_{x\in T} \mu_T(x)\log \mu_T(x)\,.
\end{align*}
It was shown in~\cite{BLPS} that, when the initial vertex $v_1$ is \emph{fixed} (independently of the graph) and the degree distribution $(p_k)_{k\geq 1}$ satisfies suitable moment assumptions, with high probability (w.h.p.) the SRW has cutoff at time $\bh_X^{-1}\log n$ whereas the NBRW has cutoff at time $\bh_Y^{-1}\log n$. Comparing these two mixing times was left open.

Comparing this with the regular case, observe that, for the NBRW, $\bh_Y = \E \log Z$, which satisfies $\bh_Y < \log\E Z$ whenever $Z$ is not a constant by Jensen's inequality. Hence, the NBRW mixes well after the time at which its range covers most vertices (local neighborhoods are approximately Galton--Watson trees with offspring variable $Z$), unlike the regular setting. The same phenomenon occurs for SRW: denoting by $\nu$ the limiting speed of SRW on $T$, then $\bh_X/\nu < \log \E Z$ whenever $Z$ is not a constant (the ``dimension drop'' of harmonic measure), as shown in~\cite{lyons1995ergodic}.

From a \emph{worst-case} initial vertex, it was showed independently in~\cite{BS,BLPS} that w.h.p.\ the NBRW also exhibits cutoff at $\bh_Y^{-1}\log n$ under similar moment assumptions (in~\cite{BS} the Gaussian tail of the distance profile within the cutoff window was further established). Here we extend the arguments of~\cite{BLPS} to provide the analogous cutoff result for the SRW from a worst-case starting point, as well as compare these cutoff locations.

%The mixing time of the SRW requires further notations. . The random walk is transient, so that its loop-erased trajectory defines a unique infinite ray $\xi$. Let $\nu$ be the speed of $(X_t)$:
%\begin{eqnarray*}
%\nu&\overset{\text{a.s.}}{:=}&\lim_{t\to\infty}\frac{1}{t}\text{dist}(X_t,\rho)\,=\,\E\left[\frac{Z-1}{Z+1}\right]\, ,
%\end{eqnarray*}
%(see \cite{lyons1995ergodic}).
%And let $\dimH$ be the Hausdorff dimension of the harmonic measure of the SRW $(X_t)$ on $T$:
%\begin{eqnarray*}
%\dimH&\overset{\text{a.s.}}{:=}&\lim_{t\to\infty}-\frac{1}{t}\log\P(\xi_t\in\xi)\, .
%\end{eqnarray*}

%\subsection{Main results}
%\label{sec:main-results}

\begin{maintheorem}\label{mainthm:main}
Let $G$ be a random graph with degree distribution $(p_k)_{k=1}^\infty$, such that
\begin{align}\label{eq-Z-min-degree-hypo}
p_1=p_2=0\,,\qquad \sum k^2 p_k < \infty\,,
\end{align}
and for some fixed $\delta>0$, the random variable $Z$ given by $\P(Z=k-1) \propto  k p_k $ satisfies
\begin{align}
\label{eq-Z-max-degree-hypo}
 \P(Z > \Delta_n) = o(1/n) \quad\mbox{ for }\quad \Delta_n := \exp\big[(\log n)^{1/2-\delta}\big]\,.
 \end{align}
Then w.h.p., \emph{SRW} from a worst-case vertex has cutoff at $\bh_X^{-1} \log n$ with window $\sqrt{\log n}$,
with $\bh_X$ as defined in~\eqref{eq-hX-def} for a Galton--Watson tree $T$ with offspring variable $Z$.
 Moreover, $\bh_X < \bh_Y$ and the \emph{NBRW} mixes faster than \emph{SRW}.
\end{maintheorem}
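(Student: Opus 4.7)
The plan has two components, corresponding to the cutoff claim and to the strict inequality $\bh_X < \bh_Y$. For the cutoff, I would extend the entropy-based argument of~\cite{BLPS}, which established cutoff at $\bh_X^{-1}\log n$ from a typical starting vertex, to a worst-case starting vertex by prepending a short burn-in phase. For the inequality, I would combine the closed-form expression $\bh_Y = \E \log Z$ with the decomposition $\bh_X = \nu \cdot \dimH_\mu$ (where $\nu$ is the asymptotic speed of SRW on $T$ and $\dimH_\mu$ is the Hausdorff dimension of the harmonic measure on $\partial T$) from~\cite{lyons1995ergodic}, and exploit the recursive structure of the GW tree.

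For the cutoff, I would first couple the SRW on $G$ with the SRW on a rooted GW tree $T$ up to time $t = \bh_X^{-1}\log n + C\sqrt{\log n}$; by the local-tree-like structure of $G$ under the hypotheses~\eqref{eq-Z-min-degree-hypo}--\eqref{eq-Z-max-degree-hypo}, this coupling succeeds with high probability. Next, via a martingale decomposition of entropy along the walk's trajectory (as in~\cite{BLPS}), I would show that $H(\P_\rho(X_t \in \cdot \mid T))$ concentrates around $\bh_X t$ with Gaussian fluctuations of scale $\sqrt{t}$, and conclude through the standard entropy-to-TV comparison against $\pi_G$ that $\tmix^{(x_0)}(\epsilon) \leq \bh_X^{-1}\log n + C_\epsilon \sqrt{\log n}$ for any $x_0$ with a GW-like local neighborhood. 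To remove this restriction, I would prepend a burn-in of length $t_{\mathrm{burn}} = o(\log n)$, during which the worst-case $x_0$ escapes any atypical region into a GW-like neighborhood with high probability; the previous argument then applies from the endpoint of the burn-in. The matching lower bound follows from the same entropy concentration: at time $\bh_X^{-1}\log n - C\sqrt{\log n}$, the walk is supported on a set of stationary mass $o(1)$, hence far from $\pi_G$.

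For the inequality $\bh_X < \bh_Y$, the identity $\bh_Y = \E \log Z$ is immediate, since the NBRW on $T$ chooses uniformly among the $Z$ offspring at each step beyond the root (with per-step entropy $\log Z$, averaged by stationarity of the offspring distribution along the NBRW ray). For SRW, $\bh_X = \nu \dimH_\mu$ with $\nu \in (0,1)$ strictly (since the SRW backtracks with positive probability at every vertex) and $\dimH_\mu \leq \log \E Z$ with equality iff $Z$ is constant~\cite{lyons1995ergodic}. The target inequality $\nu \dimH_\mu < \E \log Z$ is easy when the dimension drop is large (namely, $\dimH_\mu \leq \E \log Z$ together with $\nu < 1$ suffices), but in the near-regular regime, where $Z$ has small variance, both $\dimH_\mu$ and $\log \E Z$ are close to $\E \log Z$, and the comparison is subtle. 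My plan is to exploit the recursive fixed-point equations satisfied by $\nu$ and by the harmonic measure on the GW tree to obtain a quantitative gap $\E \log Z - \nu \dimH_\mu \geq \eta(Z) > 0$ for every non-constant $Z$---for instance, by expressing both $\nu$ and $\dimH_\mu$ as explicit functionals of a random variable associated with the tree (such as the effective conductance from the root to infinity) and directly comparing these functionals with $\E \log Z$.

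The principal obstacle in the cutoff part is the burn-in analysis for worst-case starting vertices: one must verify that no $x_0$ lies in a region of diameter comparable to $\bh_X^{-1}\log n$ with anomalous local geometry (too many high-degree vertices or short cycles), and that the walk escapes such regions quickly while maintaining the correct entropy growth rate. The principal obstacle in the entropy comparison is the near-regular regime: when $Z$ is almost constant, both the speed defect $1-\nu$ and the dimension drop $\log \E Z - \dimH_\mu$ are small, and proving strict $\nu \dimH_\mu < \E \log Z$ requires careful tracking of how these two small quantities interact---this is the delicate entropy comparison alluded to in the abstract and the main technical contribution of this part.
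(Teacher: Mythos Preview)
Your high-level plan for the cutoff matches the paper's: a short burn-in (the paper uses $8K$ steps with $K \asymp \log\log n$) to reach a vertex whose $K$-neighborhood is a tree, then invoke the~\cite{BLPS} machinery. However, your claim that ``$\tmix^{(x_0)}(\epsilon) \leq \bh_X^{-1}\log n + C_\epsilon\sqrt{\log n}$ for any $x_0$ with a GW-like local neighborhood'' hides the main difficulty, and ``the previous argument then applies from the endpoint of the burn-in'' does not resolve it. The~\cite{BLPS} result gives, for a \emph{fixed} vertex $x$, an event $\cE^G_x$ (measurable in a truncated ball of radius $\sim\nu t_0$ around $x$) with $\P(\cE^G_x) \geq 1-\epsilon$ on which $d_\tv^{(x)}(t_0) \leq \epsilon$. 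After the burn-in you land at some $z \in \partial B_K(x_0)$ according to the harmonic measure $\mu_{x_0}$, and you need the bad set $\Lambda_t = \{z : d_\tv^{(z)}(t) > \epsilon\}$ to have small $\mu_{x_0}$-mass. Since you must then union-bound over all $n$ choices of $x_0$, you need $\P(\mu_{x_0}(\Lambda_t) > \epsilon) = o(1/n)$, not merely $O(\epsilon)$. The paper's main contribution in this part is precisely this upgrade: it exposes truncated trees $\widetilde\Gamma_L(z)$ for $z \in \partial B_K(x_0)$ sequentially, groups them by their ancestors at level $\lfloor K/2\rfloor$ so that the events $(\widetilde\cE^G_z)^c$ become approximately conditionally independent across groups, uses $\max_z \mu_{x_0}(V_z) \leq 3\cdot 2^{-\lfloor K/2\rfloor}$, and applies Hoeffding--Azuma to a martingale in the exposure filtration to obtain an $n^{-2}$ tail. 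Your proposal contains no mechanism for boosting the per-vertex $1-\epsilon$ guarantee to one that survives a union bound over $n$ vertices.

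For $\bh_X < \bh_Y$, your route through $\bh_X = \nu\,\dimH_\mu$ is genuinely different from the paper's. The paper never separates $\bh_X$ into speed and dimension; instead it uses that on the augmented GW tree the entropy increments $h_t - h_{t-1}$ are non-increasing (a consequence of stationarity of the environment), which reduces the asymptotic inequality to the \emph{finite} one $h_3 - h_1 < 2\,\E\log Z$, and then verifies the latter by an explicit computation of $\E[H_3(T,\rho)]$ combined with Jensen and monotone-covariance inequalities. Your approach would require comparing $\nu$ and $\dimH_\mu$, each known only through implicit recursions, against $\E\log Z$; note also that the dimension drop of~\cite{lyons1995ergodic} gives $\dimH_\mu < \log\E Z$, not $\dimH_\mu \leq \E\log Z$, so the ``easy'' case you describe is not known to occur. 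The paper's finite-step reduction bypasses the near-regular regime you correctly flag as the obstacle.
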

Condition~\eqref{eq-Z-max-degree-hypo} is weaker than requiring $Z$ to have an exponential tail. 
Controlling the maximum degree is crucial for the local approximation of $G$ by the Galton--Watson tree $T$ to be valid; the specific bound $\exp[(\log n)^{1/2-\delta}]$ was chosen in light of the fact that one can control the probability distribution of SRW on $T$ at time  $c\log n$ up to multiplicative factors of $\exp[O(\sqrt{\log n})]$.
Further note that the assumption $p_1=p_2=0$ in~\eqref{eq-Z-min-degree-hypo} is needed, otherwise SRW would typically mix in time $c\log^2 n$ with no cutoff.
%Our assumptions on the degree distributions are two-fold: on the one hand, we need a sparsity condition on the maximum degree to ensure that the graph has a locally tree-like structure. On the other hand, we rule out degrees $1$ and $2$ to ensure that SRW has a transient behaviour. }

\medskip

The two statements of Theorem~\ref{mainthm:main} are proved separately. In Section~\ref{sec:worst-case-cutoff} we prove Proposition~\ref{prop:worst-case}, which extends the result of~\cite{BLPS} to the worst-case starting point.
In Section~\ref{sec:entropy} we establish Proposition~\ref{prop:nbrw-faster}, which shows that $\bh_X< \bh_Y$ when the degrees are at least $3$. Together, this implies that, as in the regular case, the backtracking moves of the SRW on sparse irregular random graphs delay its mixing compared to the NBRW.

\begin{proposition}\label{prop:worst-case}
Under the assumptions~\eqref{eq-Z-min-degree-hypo}--\eqref{eq-Z-max-degree-hypo}, w.h.p.\ the worst-case mixing time of the 
\emph{SRW} on $G$ exhibits cutoff at time $\bh_X^{-1} \log n$ with window $O(\sqrt{\log n})$.
\end{proposition}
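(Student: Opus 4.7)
The lower bound $\tmix(1-\epsilon) \geq \bh_X^{-1}\log n - O(\sqrt{\log n})$ is immediate from the typical-vertex cutoff of~\cite{BLPS}, since the worst-case TV distance dominates the TV distance from any fixed initial vertex. The task is therefore the upper bound from a worst-case starting point: for every $v \in V$, I must show that at time $t = \bh_X^{-1}\log n + C\sqrt{\log n}$ with $C$ large, $d_\tv^{(v)}(t) \to 0$ w.h.p.\ over $G$.

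My plan is to implement the entropy scheme of~\cite{BLPS} pointwise in $v$, in three steps. \emph{(i) Local tree coupling.} Expose edges of $G$ only as the walk traverses them. In $t = O(\log n)$ steps the walk visits at most $t$ distinct vertices, each of degree at most $\Delta_n$ w.h.p.\ by~\eqref{eq-Z-max-degree-hypo} and a union bound; a configuration-model computation then couples this partial view with a Galton--Watson tree $T^{(v)}$ whose root offspring has the true degree $D_v$ and whose descendants have offspring distribution $Z$, with coupling-failure probability $n^{-1+o(1)}$. \emph{(ii) Entropy concentration on $T^{(v)}$.} Set $W_t = -\log \P_v(X_t \in \cdot \mid T^{(v)})(X_t)$. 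Using the Markov structure, $W_t$ decomposes as a sum of $t$ one-step log-density contributions along the trajectory plus a telescoping correction; the environment-seen-from-the-walk is nearly stationary (after a short in-tree mixing phase) with drift $\bh_X$, and an Azuma/Bernstein argument, with increments truncated at scale $\log \Delta_n = o(\sqrt{\log n})$, yields $W_t = \bh_X t + O(\sqrt{t})$ with probability $1-o(1)$, uniformly over $v$ with $D_v \leq \Delta_n$ (which holds for all $v$ w.h.p.). \emph{(iii) From entropy to TV.} Step (ii) gives $\P_v(X_t = y \mid T^{(v)}) \leq n^{-1} e^{-(C\bh_X - o(1))\sqrt{\log n}}$ on an event of overwhelming $\P_v$-probability; comparing with $\pi(x) \asymp 1/n$ on most $x \in V$ (since $\pi(x) = D_x/\sum_y D_y$ and $\sum_k k^2 p_k < \infty$), I conclude $d_\tv^{(v)}(t) = o(1)$.

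The main obstacle is the uniformity over $v$ in step (ii). The typical-vertex analysis of~\cite{BLPS} is carried out from a fixed (non-adversarial) starting point, and I must adapt it to hold pointwise over all $v$ simultaneously under only the tail bound~\eqref{eq-Z-max-degree-hypo}. This forces the martingale increments to be truncated at scale $\log \Delta_n$; the reason behind the specific threshold $\Delta_n = \exp[(\log n)^{1/2-\delta}]$ is precisely that this truncation error then remains $o(\sqrt{\log n})$ and fits inside the cutoff window. Once this uniform concentration is established, the remaining pieces of~\cite{BLPS}---the tree coupling and the conversion from density estimates to TV---carry over essentially verbatim.
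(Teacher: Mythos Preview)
Your lower bound is fine, but your upper-bound scheme has a genuine gap at step~(ii): you cannot obtain the concentration \emph{uniformly} over all $n$ starting points by the route you describe. The per-vertex entropy concentration of~\cite{BLPS} only guarantees that, for a \emph{fixed} $v$, the good event $\cA^G_v$ (walk density $\leq n^{-1}e^{\gamma\sqrt{\log n}}$ on a set of mass $\geq 1-\epsilon$) holds with annealed probability $1-\epsilon$, where $\epsilon>0$ is a constant that can be made small only by enlarging the window constant $\gamma$, and is \emph{not} polynomially small in $n$. A union bound over $v$ therefore fails. Your proposed fix via Azuma/Bernstein on $W_t=-\log\P_v(X_t\in\cdot\mid T^{(v)})(X_t)$ does not help: with $t\asymp\log n$ increments each bounded by $\log\Delta_n=(\log n)^{1/2-\delta}$, Azuma gives deviation probability $\exp\bigl(-c\,\lambda^2/(t(\log\Delta_n)^2)\bigr)$, which for $\lambda=C\sqrt{\log n}$ is $\exp\bigl(-c'(\log n)^{2\delta-1}\bigr)\to 1$; Bernstein with bounded-variance increments improves this only to a constant $\exp(-c'' C^2)$, still far from $o(1/n)$. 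More fundamentally, the randomness in $W_t$ comes from both the walk and the random tree $T^{(v)}$, and the latter can with constant probability (in the graph) produce an atypical local environment at $v$; no martingale along the trajectory controls that.

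The paper's upper bound does \emph{not} try to beat this barrier directly. It inserts an intermediate scale $K\asymp\log\log n$: first, a tree-excess bound plus a biased-walk escape estimate show that after $O(K)$ steps the walk from \emph{any} vertex is at a $K$-root $x_0$, reducing to such starting points. Then, writing $\mu_{x_0}$ for the hitting measure of $\partial B_K(x_0)$, it shows that for each leaf $x\in\partial B_K(x_0)$ an event $\widetilde\cE^G_x\subset\cA^G_x$ holds with conditional probability $\geq 1-2\epsilon$ (only!), but is measurable with respect to a truncated exploration tree $\widetilde\Gamma_L(x)$ that avoids the other half of $B_K(x_0)$. A sequential exposure along the $\geq 2^{K/2}$ ancestors $z_i$ at level $\lfloor K/2\rfloor$ makes the weighted failure indicators into a martingale-difference sequence (after handling $O(\sqrt{\log n})$ collisions via a \red/\nice\ coloring), and Azuma--Hoeffding applied to $\sum_i \mu_{x_0}(V_{z_i}\cap U)$, with increments bounded by $\max_z\mu_{x_0}(V_z)\leq 3\cdot 2^{-\lfloor K/2\rfloor}$, yields failure probability $\exp\bigl(-\epsilon^2\, 2^{\lfloor K/2\rfloor}/6\bigr)<n^{-2}$, after which a union bound over $K$-roots succeeds. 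This two-scale boosting---averaging the constant-probability BLPS events over exponentially many nearly-independent leaves to manufacture an $n^{-2}$ bound---is the missing idea in your plan.
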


\begin{proposition}\label{prop:nbrw-faster}
Let $Z$ be a random variable taking nonnegative integer values such that $\E Z < \infty$ and $Z\geq 2$ a.s. Then
$\bh_X < \bh_Y$ on the GW-tree with offspring variable $Z$.
\end{proposition}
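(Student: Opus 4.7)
For NBRW the value $\bh_Y$ is immediate in closed form: on a tree, NBRW traces the unique geodesic from $\rho$ to $Y_t$, so the trajectory $(Y_0,\ldots,Y_t)$ is determined by its endpoint and the chain rule gives $H(Y_t)=\mathbb{E}\sum_{i=0}^{t-1}\log Z(Y_i)$. Since the offspring numbers seen along a NBRW path are i.i.d.\ copies of $Z$ (GW self-similarity combined with non-backtracking), dividing by $t$ and letting $t\to\infty$ yields $\bh_Y=\mathbb{E}\log Z$.

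For $\bh_X$ I would apply the Kaimanovich--Vershik identification $\bh_X=\nu\cdot\dim(\mu_H)$, where $\nu\in(0,1)$ is the SRW speed on $T$ (strictly below $1$ because $Z\ge 2$ forces every non-root vertex to have degree $\ge 3$ and so positive backtracking probability at each step), and $\mu_H$ is the harmonic measure on $\partial T$, i.e.\ the distribution of the a.s.\ limit ray $\xi=(\xi_k)$ of SRW, with dimension taken in the metric $d(\xi,\eta)=e^{-|\xi\wedge\eta|}$. The electrical-network description gives escape probabilities $\beta(v):=\mathbb{P}_v[\text{SRW never returns to the parent of }v]$ satisfying the recursion $\beta(v)=S_v/(1+S_v)$ with $S_v:=\sum_{w\text{ child of }v}\beta(w)$, and the harmonic measure descends from $v$ to child $w$ with probability $\beta(w)/S_v$. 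Combining this with the Markov property along $\xi$ and GW self-similarity yields
\[
\dim(\mu_H)=\tilde{\mathbb{E}}\,H\!\left(\tfrac{\beta_1}{S},\ldots,\tfrac{\beta_Z}{S}\right),
\]
where $\tilde{\mathbb{E}}$ is the expectation under the (biased) GW law of the stationary environment seen from the ray.

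The crux is the strict inequality $\nu\cdot\dim(\mu_H)<\mathbb{E}\log Z$. The pointwise entropy bound $H(\beta_\bullet/S)\le\log Z$ only yields $\dim(\mu_H)\le\tilde{\mathbb{E}}\log Z$, and size-biasing along the ray \emph{increases} the expected $\log Z$ (since $\beta$ and $Z$ are positively correlated through the recursion), so simply multiplying by $\nu<1$ does not close the gap. My plan is therefore to express $\bh_Y-\bh_X$ as a single explicit expectation of a functional of $(\beta_1,\ldots,\beta_Z)$ that is manifestly nonnegative and a.s.\ strictly positive, obtained by combining the entropy defect $\log Z - H(\beta_\bullet/S)$ with the recursion $\beta=S/(1+S)$ and an explicit formula for $\nu$ (as the difference between forward and backward step probabilities along the ray), so that the size-biasing correction cancels against the backtracking slack $1-\nu$. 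This last step is the main obstacle: the comparison is not reducible to a single Jensen/log-sum application and genuinely uses both the size-biasing structure and the strict positivity of $1-\nu$. The hypothesis $Z\ge 2$ enters essentially here, ensuring $\nu<1$ strictly and that the entropy defect is positive on a set of positive probability, so that the gap is strictly positive.
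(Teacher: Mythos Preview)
Your proposal sets up a reasonable framework but does not close the argument. You correctly identify $\bh_Y=\E\log Z$ and the identification $\bh_X=\nu\cdot\dim(\mu_H)$, and you correctly diagnose the obstacle: the pointwise bound $H(\beta_\bullet/S)\le\log Z$ only gives $\dim(\mu_H)\le\tilde\E[\log Z]$, and since the size-biased expectation $\tilde\E[\log Z]$ \emph{exceeds} $\E[\log Z]$, the factor $\nu<1$ does not by itself yield $\bh_X<\bh_Y$. Your resolution, however, is only a \emph{plan}: you assert that $\bh_Y-\bh_X$ can be written as the expectation of a functional of $(\beta_1,\ldots,\beta_Z)$ that is ``manifestly nonnegative and a.s.\ strictly positive,'' with the size-biasing excess cancelling against $1-\nu$. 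No such functional is exhibited and no cancellation is verified; you yourself flag this as ``the main obstacle.'' Since neither $\nu$ nor $\dim(\mu_H)$ admits a closed form for general $Z$ (both are expectations, under the stationary environment along the ray, of quantities defined implicitly through the nonlinear recursion $\beta=S/(1+S)$), there is no a priori reason the pieces should combine into a sign-definite expression. That is precisely the content of the proposition, and in your write-up it remains an unproved hope.

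The paper takes a completely different and more elementary route that sidesteps harmonic measure, speed, and the $\beta$-recursion entirely. Working on the augmented GW-tree (a stationary environment), it uses the fact that the entropy increments $t\mapsto h_t-h_{t-1}$ are non-increasing to obtain $\bh_X\le\tfrac12(h_3-h_1)$. The problem thus reduces to the \emph{finite-time} inequality $h_3-h_1<2\,\E\log Z$, which is proved by an explicit computation of $\E[H_3(T,\rho)]$: split according to whether $X_3$ is at distance $3$ or $1$ from $\rho$, apply Jensen to the distance-$1$ contribution, and finish with two Chebyshev-type covariance inequalities together with a one-variable calculus check that a certain function is negative on $[2,\infty)$. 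The key idea you are missing is this reduction to small $t$ via concavity of $t\mapsto h_t$; once that is in place, the comparison becomes a concrete calculation rather than a structural cancellation at the boundary.
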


%\begin{remark}
%This result can be seen as an upper bound for the Hausdorff dimension of the harmonic measure:
%\begin{displaymath}
%\dimH\leq \frac{\E[\log Z]}{\E\left[\frac{Z-1}{Z+1}\right]}\, .
%\end{displaymath}
%There is no universal comparison between this bound and $\log\E[Z]$, in the sense that it can be larger or smaller depending on $Z$.
%\end{remark}

\section{Simple random walk from the worst starting point}\label{sec:worst-case-cutoff}
In this section we prove Proposition~\ref{prop:worst-case}, establishing cutoff and its location for SRW on a sparse random graph with degree distribution $(p_k)_{k=0}^\infty$. Recalling that the minimum degree is $3$ by assumption ($p_1=p_2=0$), let $\Delta=\Delta(n)$ denote the maximum degree in $G$, which we may assume is at most $\exp[(\log n)^{1/2-\delta}]$ for some $\delta>0$ fixed.
The lower bound on the cutoff window from a worst-case starting point follows immediately from~\cite[Theorem~2]{BLPS} (where it was established for a uniformly chosen initial vertex, and in particular carries to the worst one), and it remains to show a matching upper bound.

The first step in the proof is to reduce the analysis of worst-case starting points, as was done in~\cite{LS10}, to vertices whose neighborhood up to distance $c \log\log n$, for an appropriate constant $c>0$, is a tree. For $x\in V$ and $k>0$, let $B_k(x)=\{ y : \dist(x,y) \leq k\}$ be the $k$-radius neighborhood of $x$, and let $\partial B_k(x) = \{y:\dist(x,y)=k\}$.
Further define the \emph{tree excess} of $B_k(x)$, denoted $\tx(B_k(x))$, to be the maximum number of edges that can be deleted from the induced subgraph on $B_k(x)$ while keeping it connected.
\begin{definition}\label{def:K-root}
Let $K>0$. A vertex $x\in V$ is said to be a $K$-root if $\tx(B_K(x))=0$.
\end{definition}
The next two straightforward lemmas follow~\cite[Lemmas~2.1 and~3.2]{LS10}.
\begin{lemma}\label{lem:tree-excess}
Let $K=O(\sqrt{\log n})$. W.h.p., every vertex $x\in V$ satisfies $\tx(B_{5K}(x))\leq 1$.
\end{lemma}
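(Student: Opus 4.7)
My approach is to reveal $G$ by sequential half-edge pairings performed in a breadth-first order from $x$, and to show that during the exposure of $B_{5K}(x)$ the probability of two or more ``back-edge'' pairings (i.e.\ pairings whose partner is a half-edge already attached to an explored vertex) is small enough to survive a union bound over $x\in V$.

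First I would use~\eqref{eq-Z-max-degree-hypo}, together with the size-biased identity $\P(Z=k-1)=kp_k/\E D$, to conclude that w.h.p.\ the maximum degree $\Delta$ in $G$ satisfies $\Delta\leq \Delta_n:=\exp[(\log n)^{1/2-\delta}]$. On this event, the trivial bound yields
\begin{align*}
|B_{5K}(x)|\leq 1+\Delta+\cdots+\Delta^{5K}\leq \Delta^{5K+1},
\end{align*}
and the number of half-edges incident to $B_{5K}(x)$ is at most $\Delta^{5K+2}\leq \exp[O((\log n)^{1-\delta})]=n^{o(1)}$, using $K=O(\sqrt{\log n})$ and $\delta>0$.

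Next I would carry out a breadth-first configuration-model exploration starting at $x$, pairing one half-edge at a time. Since $\sum k^2 p_k<\infty$ gives $2m=\sum_y D_y=\Theta(n)$ w.h.p., at each step the partner is uniform among $2m-O(n^{o(1)})=\Theta(n)$ remaining half-edges. A pairing is \emph{bad} if its partner is one of the at most $\Delta^{5K+2}=n^{o(1)}$ half-edges incident to already-explored vertices; thus each pairing is bad with probability at most $n^{-1+o(1)}$. Because $G[B_{5K}(x)]$ is connected, $\tx(B_{5K}(x))$ equals exactly the number of bad pairings made while fully exposing the ball, and the total number of pairings involved is at most $\Delta^{5K+2}=n^{o(1)}$. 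Therefore
\begin{align*}
\P(\tx(B_{5K}(x))\geq 2)\leq\binom{n^{o(1)}}{2}\bigl(n^{-1+o(1)}\bigr)^2 = n^{-2+o(1)},
\end{align*}
and a union bound over the $n$ choices of $x$ concludes the proof.

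The main obstacle is that atypically large degrees could blow up $|B_{5K}(x)|$ well beyond the branching-process heuristic $(\E Z)^{5K}$; the tail assumption~\eqref{eq-Z-max-degree-hypo} is calibrated precisely for this situation, keeping $\Delta^{5K}$ subpolynomial in $n$ and leaving comfortable slack in the union bound. The remaining routine bookkeeping is verifying that the denominator $2m-O(n^{o(1)})$ stays $\Theta(n)$ throughout the exploration, which follows from $\E D<\infty$ and the concentration of $\sum_y D_y$ around $n\E D$ via $\sum k^2 p_k<\infty$.
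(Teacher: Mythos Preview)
Your proposal is correct and follows essentially the same approach as the paper: breadth-first exposure of $B_{5K}(x)$ in the configuration model, the bound $\Delta^{O(K)}=n^{o(1)}$ on the number of half-edges involved, stochastic domination of the tree excess by a binomial, and a union bound over $x$. The only minor difference is that the paper lower-bounds the number of available half-edges deterministically via $\sum_y D_y\geq 3n$ (from $p_1=p_2=0$) rather than invoking concentration of $\sum_y D_y$ from the second-moment hypothesis; this shortcut is slightly cleaner but not essential.
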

\begin{proof}
Fix $x\in V$, and condition on the degrees $\{D_u : u\in V\}$, noticing $\sum D_u \geq 3 n$ since $p_1=p_2=0$.
 We generate the ball of radius $5K$ around $x$ sequentially, by the standard breadth-first search process of the configuration model ({\it cf.}, {\it e.g.},~\cite{Bollobas-RG}): identifying each vertex $u$ with $\deg(u)$ ``half-edges,'' denoted $(u,*)$, we start by inserting all half-edges of $x$ to a first-in-first-out queue. Upon extracting a half-edge $(u,*)$ from the queue it is matched to a uniformly chosen unmatched half-edge $(v,*)$ in $G$, to form the edge $(x,y)$, and all other half-edges of the vertex $v$ are inserted to the queue (if not already there).

If $M$ is the total number of half-edges encountered until first observing a vertex at distance $5K+1$ from $x$ (whence $B_{5K}(x)$ is fully exposed), then  clearly we will perform at most $M$ steps of extracting a half-edge from the queue and matching it, and in each such step, the number of possible matches in the queue (adding a cycle in $G$ and increasing $\tx(B_{5K}(x))$) is at most $M$ vs.\ at least $3n-M$ half-edges outside the queue.
Since $M \leq \Delta^{5K+1} $, it follows that $ \tx(B_{5K}(x)) $ is stochastically dominated by a  binomial random variable $\Bin(\Delta^{5K+1}, \Delta^{5K+1}/(3n-\Delta^{5K+1}))$, and noting that $\Delta^{5K+1}  \leq  \exp\left[ O((\log n)^{1-\delta})\right] = n^{o(1)}$ by our assumptions on $K$ and $\Delta$,  we find that
\[
\PBig{\tx(B_{5K}(x))\geq 2} \leq \binom{\Delta^{5K+1}}2 \bigg( \frac{\Delta^{5K+1}}{(3-o(1))n}\bigg)^2
= n^{-2+o(1)}\, .
\]
A union bound over $x$ concludes the proof.
\end{proof}

\begin{lemma}\label{lem:reach-root}
Let $K>0$, and let $x$ be a vertex in a  graph $H$ with minimum degree 3, such that $\tx(B_{5K}(x))\leq 1$. Then \emph{SRW} of length $4K$ started at $x$ ends at a $K$-root with probability at least $1-e^{-K/128}$.
\end{lemma}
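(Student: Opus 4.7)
My plan is to split on $\tx(B_{5K}(x))\in\{0,1\}$. The case $\tx(B_{5K}(x))=0$ is immediate: $B_{5K}(x)$ is itself a tree, so $B_K(y)\subseteq B_{5K}(x)$ is a tree for every $y\in B_{4K}(x)$, and a length-$4K$ walk from $x$ trivially lands in $B_{4K}(x)$. I therefore focus on $\tx(B_{5K}(x))=1$ and let $C$ denote the unique cycle in $B_{5K}(x)$. The first step will be to show that the event $\{\dist_{B_{5K}(x)}(W_{4K},C)>K\}$ already forces $W_{4K}$ to be a $K$-root. For this, I use that for any $y\in B_{4K}(x)$, every $H$-path from $y$ to a vertex of $B_{5K}(x)$ that leaves $B_{5K}(x)$ must first reach $\partial B_{5K}(x)$ (at graph distance at least $5K-4K=K$ from $y$) and re-enter, and hence has length at least $K+2$. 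Consequently $\dist(y,c)\ge\min\bigl(\dist_{B_{5K}(x)}(y,c),\,K+2\bigr)$ for every $c\in C$, so $\dist_{B_{5K}(x)}(W_{4K},C)>K$ implies $\dist(W_{4K},c)>K$ for every $c\in C$. Since $C$ is the only cycle in $B_{5K}(x)$ and $B_K(W_{4K})\subseteq B_{5K}(x)$, this forces $B_K(W_{4K})$ to be a tree.

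It then suffices to prove $\P_x(R_{4K}\le K)\le e^{-K/128}$, where $R_t:=\dist_{B_{5K}(x)}(W_t,C)$. The heart of the argument will be a uniform one-step drift bound of $1/3$ for $R_t$. Since $H$ has minimum degree $3$ and the walk stays in $B_{4K}(x)$, every visited vertex $v$ has degree $D\ge 3$ with all neighbors in $B_{5K}(x)$. At an off-cycle vertex $v$ --- which lies in the forest $B_{5K}(x)\setminus E(C)$ of trees rooted on $C$ --- $v$ has exactly one neighbor closer to $C$ and $D-1\ge 2$ farther, giving $\E[R_{t+1}-R_t\mid W_t=v]=(D-2)/D\ge 1/3$. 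At a cycle vertex, the walk steps off $C$ with probability $(D-2)/D\ge 1/3$ (selecting one of the $D-2\ge 1$ off-cycle neighbors), giving $R_{t+1}=1$; otherwise $R_{t+1}=0$. In all cases $|R_{t+1}-R_t|\le 1$ and the conditional drift is $\ge 1/3$. I would then apply a concentration inequality (Azuma--Hoeffding, or a Bernstein-type sharpening exploiting the $\{-1,0,1\}$-valued increments) to the associated supermartingale $R_t-R_0-\sum_{s<t}\E[R_{s+1}-R_s\mid\mathcal{F}_s]$, taking the deviation parameter of order $K/3$, to obtain the stated bound.

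The main obstacle will be establishing the uniform drift bound of $1/3$ at \emph{every} vertex the walker can visit --- in particular at cycle vertices and at the frontier of $B_{4K}(x)$. This is precisely where the minimum-degree-$3$ hypothesis enters essentially: a degree-$2$ vertex on the frontier would contribute zero drift and the exponential concentration would collapse. A secondary, more technical point --- that $H$-distances to cycle vertices agree with $B_{5K}(x)$-distances for the purposes of the $K$-root criterion --- is handled by the shortcut-length bound of $\ge K+2$ derived in the first paragraph.
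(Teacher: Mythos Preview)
Your proposal is correct and follows essentially the same route as the paper: reduce to the case of a single cycle $\mathcal{C}$ in $B_{5K}(x)$, track $\rho_t=\dist(X_t,\mathcal{C})$, establish the uniform drift bound $\E[\rho_{t+1}-\rho_t\mid X_t]\geq 1/3$ via the minimum-degree-$3$ hypothesis (one case for on-cycle vertices, one for off-cycle), and conclude with Hoeffding--Azuma. The paper works directly with the submartingale $\rho_t-\tfrac{t}{3}$ (whose increments satisfy $|M_{t+1}-M_t|\leq \tfrac{4}{3}$) rather than the Doob martingale you wrote down---note that what you called a ``supermartingale'' is in fact a martingale---but this yields the same constant $e^{-K/128}$; your additional care in distinguishing $\dist_H$ from $\dist_{B_{5K}(x)}$ and in justifying why $\rho_{4K}>K$ forces $X_{4K}$ to be a $K$-root is a point the paper leaves implicit.
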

\begin{proof}
 Let $(X_t)$ be SRW on $H$ started at $x$. If $x$ is a $5K$-root, then the claim is trivial. Otherwise, the induced subgraph on $B_{5K}(x)$ has exactly one cycle $\mathcal{C}$, by assumption. We claim that in this situation,
 \begin{equation}
   \label{eq-dist-Xt-C}
   \Pbig{ \dist(X_{4K},\cC)  < K } \leq \exp(- K/128)\,.
 \end{equation}
 Let $\rho_t=\dist(X_t,\mathcal{C})$. By the minimum degree assumption, $\P(\rho_{t+1}=0 \mid \rho_t=0\,,\, X_t) \leq \frac23$, whereas on the event $\rho_t > 0$ one has
$|\rho_{t+1}-\rho_t|=1$ and $\P(\rho_{t+1} < \rho_t \mid \rho_t>0\,,\,X_t) \leq \frac13$. Combining both situations,
$\E[\rho_{t+1}-\rho_t\given X_t]\geq  \frac{1}{3}$, so $M_t = \rho_t -\frac13 t$ is a submartingale with $|M_{t+1}-M_t| \leq \frac43$.
By the Hoeffding--Azuma inequality,
\begin{eqnarray*}
\Pleft{\rho_{4K} \leq \rho_0 +K} = \Pleft { M_{4K} \leq M_0 - \frac{K}3 }\leq \exp\biggl(-\frac{(K/3)^2}{2(\frac43)^2 \cdot 4K}\biggr) = e^{-K/128}\,,
\end{eqnarray*}
establishing~\eqref{eq-dist-Xt-C} and concluding the proof.
\end{proof}
Observe that Lemma~\ref{lem:tree-excess} gives, w.h.p., the hypothesis of Lemma~\ref{lem:reach-root} simultaneously for all $x\in V$ provided that $K=O(\sqrt{\log n})$, implying then that
\begin{equation}
  \label{eq-K-root-reduction}
  d_\tv(t + 4K)  \leq \max\left\{ d_\tv^{(x_0)}(t) \,:\; x_0\mbox{ is a $K$-root}\right\}+e^{-K/128}
\end{equation}
holds w.h.p. By choosing a suitably large $K$, this will allow us to reduce the problem to initial vertices that are $K$-roots.
To bound the mixing time from a $K$-root $x_0$, let
\[ \tau_K = \tau_K(x_0) := \inf\left\{ t>0 \,:\; X_t \in \partial B_K(x_0)\right\} \,,\]
where $X_t$ is the SRW on $G$ started at $x_0$, and further define
\[
\Lambda_t =\Lambda_t(x_0,K,\epsilon) := \left\{x\in \partial B_K(x_0)\,:\; d_\tv^{(x)}(t)>\varepsilon\right\}\, .
\]
The next simple lemma bounds $d_\tv^{(x_0)}$ via the probability that $X_{\tau_K} \in \Lambda_t$ started from $x_0$.
\begin{lemma}\label{lem:TV_decomp}
For every $x_0\in V$ and every $\epsilon,K,s,t>0$,
\[
d_\tv^{(x_0)}(t+s)
%&\leq & \P_{x_0}\left(\tau_K>s\right)+\sum_{z\in S_K}\P_{x_0}\left(X_{\tau_K}=z\right)d_z(t)\\
%%%% &\leq & \P_{x_0}\left(\tau_K>s\right)\,+\sum_{z\in S_K \cap\cB}\P_{x_0}\left(X_{\tau_K}=z\right)\,+\,\varepsilon\, .
\leq \tfrac12 \P_{x_0}^G\left(\tau_K>s\right)+ \P_{x_0}^G\left(X_{\tau_K} \in \Lambda_t\right) +\epsilon \,.
\]
\end{lemma}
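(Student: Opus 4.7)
The plan is a standard strong-Markov decomposition at the stopping time $\sigma := \tau_K(x_0)\wedge s$, which records whether the walk has exited $B_K(x_0)$ by time~$s$ (in which case $\sigma=\tau_K$ and $X_\sigma\in\partial B_K(x_0)$) or is still trapped inside (in which case $\sigma=s$). Applying the strong Markov property at $\sigma$ and using convexity of the total variation norm,
\[
d_\tv^{(x_0)}(t+s) \;\leq\; \E_{x_0}^G\!\bigl[\,\|P^{t+s-\sigma}(X_\sigma,\cdot)-\pi\|_\tv\,\bigr].
\]

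Next I would split the right-hand side across three disjoint events. On $\{\tau_K\leq s,\,X_{\tau_K}\notin\Lambda_t\}$, since any Markov kernel is a contraction for total variation and $\pi P=\pi$,
\[
\|P^{t+s-\tau_K}(X_{\tau_K},\cdot)-\pi\|_\tv
\;=\; \|(P^t(X_{\tau_K},\cdot)-\pi)\,P^{s-\tau_K}\|_\tv
\;\leq\; \|P^t(X_{\tau_K},\cdot)-\pi\|_\tv
\;\leq\; \epsilon,
\]
by the very definition of $\Lambda_t$, so this event contributes at most $\epsilon$. On $\{\tau_K\leq s,\,X_{\tau_K}\in\Lambda_t\}$ the integrand is at most $1$, contributing at most $\P_{x_0}^G(X_{\tau_K}\in\Lambda_t)$. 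On $\{\tau_K>s\}$, where $\sigma=s$, the corresponding contribution to $P^{t+s}(x_0,\cdot)-\pi$ is the signed measure $\E_{x_0}^G\bigl[\ind_{\tau_K>s}(P^t(X_s,\cdot)-\pi)\bigr]$; since it has zero total mass, its total variation norm equals $\tfrac12$ times its $\ell^1$-norm, which together with $\|P^t(X_s,\cdot)-\pi\|_\tv\leq 1$ yields the factor $\tfrac12\P_{x_0}^G(\tau_K>s)$.

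No step presents a genuine obstacle --- the proof is a routine combination of the strong Markov property, convexity, and Markov-kernel contractivity, with a small amount of signed-measure bookkeeping to secure the factor $\tfrac12$ in front of $\P(\tau_K>s)$. Conceptually, the lemma reduces the worst-case mixing analysis to controlling where the walk sits upon hitting $\partial B_K(x_0)$: one then needs to show that, with high probability, $\tau_K$ is small compared to the target mixing time and that $X_{\tau_K}$ typically lies outside $\Lambda_t$, both of which are amenable to the tree-like analysis of $B_K(x_0)$ around a $K$-root together with the entropy estimates of~\cite{BLPS} at the target time $t=\bh_X^{-1}\log n+O(\sqrt{\log n})$.
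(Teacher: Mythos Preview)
Your approach is essentially the paper's: decompose via the strong Markov property at the exit time, use contractivity/monotonicity in $t$ to reduce to $d_\tv^{(z)}(t)$, and split according to whether $\tau_K\le s$ and whether $X_{\tau_K}\in\Lambda_t$. The first two events are handled exactly as in the paper.

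There is one small bookkeeping slip, in your justification of the factor $\tfrac12$ in front of $\P_{x_0}^G(\tau_K>s)$. The signed measure $\nu_1:=\E_{x_0}^G\bigl[\ind_{\tau_K>s}(P^t(X_s,\cdot)-\pi)\bigr]$ does have zero total mass, so its total-variation norm equals $\tfrac12\|\nu_1\|_1$; but $\|P^t(X_s,\cdot)-\pi\|_\tv\le 1$ means $\|P^t(X_s,\cdot)-\pi\|_1\le 2$, and hence $\|\nu_1\|_\tv\le \tfrac12\cdot 2\,\P_{x_0}^G(\tau_K>s)=\P_{x_0}^G(\tau_K>s)$, not $\tfrac12\P_{x_0}^G(\tau_K>s)$: the two halves cancel. (Separately, once you have already passed to the convexity upper bound $\E_{x_0}^G[\|P^{t+s-\sigma}(X_\sigma,\cdot)-\pi\|_\tv]$, you cannot consistently revert to the signed-measure level for just one of the three events.) This is harmless for the application, which only uses $\P_{x_0}^G(\tau_K>s)=o(1)$; and, read literally, the paper's own proof has the same imprecision, since passing from $\tfrac12\|\P_{x_0}^G(X_{t+s}\in\cdot,\tau_K\le s)-\pi\|_1$ to $\sum_{\ell,z}\P_{x_0}^G(\tau_K=\ell,X_{\tau_K}=z)\,d_\tv^{(z)}(t+s-\ell)$ also costs an extra $\tfrac12\P_{x_0}^G(\tau_K>s)$.
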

\begin{proof}
Write $S_K = \partial B_K(x_0)$ for brevity.
By the triangle inequality,
\begin{align*}
 \left\| \P_{x_0}^G\left(X_{t+s}\in\cdot\right) -\pi\right\|_1
 %\tfrac12 \left\| \P_{x_0}^G\left(X_{t+s}\in\cdot,\, \tau_K> s\right) +\P_{x_0}^G\left(X_{t+s}\in\cdot,\, \tau_K\leq s\right)-\pi\right\|_1\\
\leq  \P_{x_0}^G\left(\tau_K>s\right)+ \left\| \P_{x_0}^G\left(X_{t+s}\in\cdot,\, \tau_K\leq s\right)-\pi\right\|_1\, .
\end{align*}
Using the strong Markov property, $\P_{x_0}^G\left(X_{t+s}\in\cdot,\, \tau_K\leq s\right)$ is equal to
\begin{align*}
& \sum_{\ell=1}^s\sum_{z\in S_K} \P_{x_0}^G\left(\tau_K=\ell,X_{\tau_K}=z\right) \P_{x_0}^G\left(X_{t+s}\in\cdot\given \tau_K=\ell,X_{\tau_K}=z \right)\\
&= \sum_{\ell=1}^s\sum_{z\in S_K} \P_{x_0}^G\left(\tau_K=\ell,X_{\tau_K}=z\right)
\P_z^G\left(X_{t+s-\ell}\in\cdot\right)\, .
\end{align*}
Combining these two statements, $d_\tv^{(x_0)}(t+s) = 
 \tfrac12 \left\| \P_{x_0}^G\left(X_{t+s}\in\cdot\right) -\pi\right\|_1$ satisfies
\begin{align*}
d_\tv^{(x_0)}(t+s)&\leq \tfrac12\P_{x_0}^G\left(\tau_K>s\right)+\sum_{\ell=1}^s\sum_{z\in S_K} \P_{x_0}^G\left(\tau_K=\ell,\,X_{\tau_K}=z\right)d_\tv^{(z)}(t+s-\ell)\\
&\leq  \tfrac12\P_{x_0}^G\left(\tau_K>s\right)+\sum_{z\in S_K} \P_{x_0}^G\left(X_{\tau_K}=z\right)d_\tv^{(z)}(t)\, ,
\end{align*}
where we used that $t\mapsto d_\tv^{(z)}(t)$ is non-decreasing. The proof is concluded by breaking the summation over $z$ to $S_K \cap \Lambda_t$ and $ S_K \cap \Lambda_t^c$ and using the bound $d_\tv^{(z)}(t)\leq \epsilon$ in the latter case (by definition of $\Lambda_t$) and
$d_\tv^{(z)}(t)\leq 1$ in the former.
\end{proof}
For every $K$-root $x_0\in V$, one has
\[
\P_{x_0}^G\left(\tau_K>4K\right) \leq e^{-K /128}
\]
by the exact same proof of~\eqref{eq-dist-Xt-C} (the root vertex $x_0$ plays the role of $\cC$ in that argument).
Together with~\eqref{eq-K-root-reduction} and Lemma~\ref{lem:TV_decomp}, this implies the following.
\begin{corollary}\label{cor:dtv-reduction}
For all $\epsilon,t>0$ (that may depend on $n$) and $K=O(\sqrt{\log n})$, w.h.p.
\begin{equation}
  \label{eq-Bt-reduction}
  d_\tv(t + 8K)  \leq \max\left\{ \mu_{x_0}(\Lambda_t) \;:\; x_0\mbox{ is a $K$-root}\right\}+\epsilon + \tfrac32e^{-K/128}\,,
\end{equation}
where $\mu_{x_0}=\P_{x_0}^G(X_{\tau_K} \in \cdot)$ is the hitting measure of the \emph{SRW} from $x_0$ on $\partial B_K(x_0)$.
\end{corollary}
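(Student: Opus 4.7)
The plan is to combine the three ingredients that have already been assembled in this section: the reduction to $K$-root starting points from~\eqref{eq-K-root-reduction}, the hitting-time decomposition of Lemma~\ref{lem:TV_decomp}, and the deterministic escape estimate $\P_{x_0}^G(\tau_K > 4K) \leq e^{-K/128}$ that holds whenever $x_0$ is a $K$-root. No new graph-theoretic input is needed; the corollary is essentially the bookkeeping that packages these three bounds into a single statement tailored for the cutoff argument in the next section.

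Concretely, for a fixed $K$-root $x_0$, I would apply Lemma~\ref{lem:TV_decomp} with the given $t, \epsilon$ and with $s = 4K$, which yields
\[
d_\tv^{(x_0)}(t+4K) \leq \tfrac{1}{2}\,\P_{x_0}^G(\tau_K > 4K) + \P_{x_0}^G(X_{\tau_K} \in \Lambda_t) + \epsilon \leq \tfrac{1}{2}e^{-K/128} + \mu_{x_0}(\Lambda_t) + \epsilon,
\]
where the second inequality uses the escape-time bound (which is deterministic given that $x_0$ is a $K$-root) together with the definition $\mu_{x_0} = \P_{x_0}^G(X_{\tau_K} \in \cdot)$. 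Since this holds for every $K$-root $x_0$ in the realized graph $G$, I can take the maximum over $K$-roots.

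Next I would feed this into the reduction~\eqref{eq-K-root-reduction}, which was proved to hold w.h.p.\ for any target time as soon as $K = O(\sqrt{\log n})$, with $t$ there replaced by $t+4K$. This gives, w.h.p.,
\[
d_\tv(t+8K) \leq \max\bigl\{ d_\tv^{(x_0)}(t+4K) : x_0\text{ a $K$-root}\bigr\} + e^{-K/128},
\]
and substituting the per-$K$-root bound above produces the total error $\epsilon + \tfrac{1}{2}e^{-K/128} + e^{-K/128} = \epsilon + \tfrac{3}{2}e^{-K/128}$, which is exactly~\eqref{eq-Bt-reduction}.

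There is essentially no obstacle here: the only point to double-check is that the w.h.p.\ event underlying~\eqref{eq-K-root-reduction}---namely the conclusion of Lemma~\ref{lem:tree-excess}, that $\tx(B_{5K}(x))\leq 1$ for every $x\in V$---is a property of $G$ alone and does not depend on the time parameter, so the reduction may be invoked at time $t+4K$ just as well as at time $t$. Likewise, the escape estimate $\P_{x_0}^G(\tau_K > 4K) \leq e^{-K/128}$ is deterministic in $G$ once $x_0$ is known to be a $K$-root (it repeats the submartingale argument of Lemma~\ref{lem:reach-root} with $x_0$ itself playing the role of the cycle $\cC$), so it may be used simultaneously for all $K$-roots on the same high-probability event.
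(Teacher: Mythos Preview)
Your proposal is correct and follows exactly the same approach as the paper: apply Lemma~\ref{lem:TV_decomp} with $s=4K$ together with the escape bound $\P_{x_0}^G(\tau_K>4K)\le e^{-K/128}$ for $K$-roots, then feed the result into~\eqref{eq-K-root-reduction} at time $t+4K$. The paper's own justification is just a one-sentence pointer to these same three ingredients, and your arithmetic for the $\tfrac32 e^{-K/128}$ term is the intended one.
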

Combining this corollary with the following theorem will conclude the proof.
\begin{theorem}\label{thm:mu(Bt)}
For every $\epsilon>0$ there exists some  $\gamma_\star>0$ such that, if
\begin{equation}
\label{eq:t1-K-def} t_1 = \bigl\lceil \bh_X^{-1} \log n + \gamma_\star\sqrt{\log n}\bigr\rceil\qquad\mbox{ and }\qquad K = \left\lceil \gamma_\star \log \log n \right\rceil\,,
\end{equation}
then  for every tree $T_0$ such that $\P(B_K(x_0)=T_0)>0$ and sufficiently large $n$,
  \[ \PBig{\mu_{x_0}(\Lambda_{t_1}) > \epsilon \given B_K(x_0) = T_0} < n^{-2}\,.\]
\end{theorem}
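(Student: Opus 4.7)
The plan is to reduce the claim via Markov's inequality and a per-vertex bound to a quantitative form of the entropy argument of~\cite{BLPS}, applied from each candidate exit vertex on $\partial T_0$. The starting observation is that any trajectory from $x_0$ to its first visit of $\partial B_K(x_0)$ stays inside $T_0$ (interior vertices have all their $G$-neighbors inside $T_0$), so $\mu_{x_0}$ is a deterministic function of $T_0$. Consequently,
\[
\Ebig{\mu_{x_0}(\Lambda_{t_1})\given B_K(x_0)=T_0}=\sum_{z\in\partial T_0}\mu_{x_0}(z)\,\Pbig{z\in\Lambda_{t_1}\given B_K(x_0)=T_0}\,,
\]
and since $|\partial T_0|\leq\Delta^K=n^{o(1)}$ (by $K=O(\log\log n)$ and $\Delta\leq\exp[(\log n)^{1/2-\delta}]$), it suffices to prove the per-vertex estimate
\begin{equation}\label{eq-per-vertex-plan}
\Pbig{z\in\Lambda_{t_1}\given B_K(x_0)=T_0}\leq n^{-3}\qquad(z\in\partial T_0),
\end{equation}
after which Markov's inequality yields $\PBig{\mu_{x_0}(\Lambda_{t_1})>\epsilon\given T_0}<n^{-2}$.

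For a fixed $z\in\partial T_0$, I would adapt the BLPS entropy argument to $z$. Conditional on $B_K(x_0)=T_0$, the remaining half-edges of $G$ form a uniform configuration model, so the forward neighborhood of $z$ in $G\setminus T_0$ couples up to depth $r=(1+o(1))\spe\, t_1=O(\log n)$ with a Galton--Watson tree of offspring $Z$, outside an event of probability $n^{-2+o(1)}$ (by the same BFS estimate as in Lemma~\ref{lem:tree-excess}, still valid at radius $r$ since $\Delta^{r+1}=n^{o(1)}$). On this coupling one tracks the log-probability $L_t:=-\log p_t^z(X_t)$, where $p_t^z(v):=\P_z^G(X_t=v)$: this functional has drift $\bh_X$ by~\eqref{eq-hX-def}, per-step jumps bounded by $2\log\Delta=O((\log n)^{1/2-\delta})$, and positive per-step conditional variance from the branching structure (using $p_1=p_2=0$). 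Concentration of $L_{t_1}$ around $\bh_X t_1=\log n+\bh_X\gamma_\star\sqrt{\log n}$, together with $\pi(v)\asymp 1/n$ on the bulk of $V$, then furnishes $d_\tv^{(z)}(t_1)\leq\epsilon$ outside an event whose probability one hopes to make $\leq n^{-3}$ by choosing $\gamma_\star$ large enough (depending on $\epsilon$ and the increment variance of $\log Z$).

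The main technical obstacle lies precisely in this last step: upgrading the per-vertex failure probability of the BLPS entropy estimate from $o(1)$ to the polynomial $n^{-3}$ required here. The original BLPS argument is tailored to typical starting vertices and yields only an $o(1)$ error; for the worst-case extension, one must combine its Gaussian concentration of $L_{t_1}$ at scale $\sqrt{\log n}$ with separate polynomial-in-$n$ bounds on exceptional events (e.g.\ unusually large degrees encountered along the walk, early cycles in the BFS exploration, or atypical speed). A secondary difficulty is that the GW-coupling must survive to depth $r=O(\log n)$ rather than the $O(\sqrt{\log n})$ used in Lemma~\ref{lem:tree-excess}; this is precisely why the hypothesis~\eqref{eq-Z-max-degree-hypo} is formulated as $\Delta_n=\exp[(\log n)^{1/2-\delta}]$, which keeps $\Delta^{r+1}=n^{o(1)}$ and hence the cycle-appearance probability in the forward exploration polynomially small, just enough to support the target $n^{-3}$ bound in~\eqref{eq-per-vertex-plan}.
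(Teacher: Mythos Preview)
Your reduction to a per-vertex bound~\eqref{eq-per-vertex-plan} is the wrong route, and the obstacle you flag at the end is not merely technical but fatal. The BLPS machinery, even after the adaptations you describe, yields for a fixed leaf $z\in\partial T_0$ only
\[
\Pbig{z\in\Lambda_{t_1}\given B_K(x_0)=T_0}\leq C\epsilon
\]
for a constant $C$, and this cannot be upgraded to $n^{-3}$ by enlarging $\gamma_\star$. The reason is that the failure event is governed by Markov's inequality applied to the \emph{annealed} truncation probability: one knows $\bE^{T_0}[\P_z^G(\Upsilon^c)]\leq\epsilon$ and deduces $\bP^{T_0}(\P_z^G(\Upsilon^c)>\sqrt\epsilon)\leq\sqrt\epsilon$, but the quenched truncation probability has no reason to be polynomially concentrated around its mean---it depends on the local tree structure around $z$, which has constant-order fluctuations. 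Increasing $\gamma_\star$ shifts the target entropy level but does not damp these fluctuations. A secondary but concrete error: your claim that $\Delta^{r+1}=n^{o(1)}$ for $r=O(\log n)$ is false, since $\Delta^{r}=\exp[O((\log n)^{3/2-\delta})]$ is super-polynomial; this is precisely why BLPS introduces \emph{truncated} GW-trees rather than naive BFS coupling to depth $L$.

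The paper circumvents the per-vertex bottleneck entirely. It accepts that for each $x\in\partial T_0$ the good event $\widetilde{\cE}^G_x$ (which implies $x\notin\Lambda_{t_1}$) holds only with probability $1-O(\epsilon)$, and instead exploits approximate independence across the subtrees $V_{z_i}$ rooted at $z_i\in\partial B_{\lfloor K/2\rfloor}(x_0)$: the event $\widetilde{\cE}^G_x$ is made measurable with respect to a subtree $\widetilde\Gamma_L(x)$ that, by construction, avoids $B_{\lfloor K/2\rfloor}(x_0)$ and hence (modulo a small number of \textsc{red} collisions, controlled in Lemma~\ref{lem:number-reds}) does not interact with the subtrees exposed for other $z_j$. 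Combined with the deterministic bound $\mu_{x_0}(V_{z_i})\leq 3\cdot 2^{-\lfloor K/2\rfloor}$ (so no single subtree carries too much $\mu_{x_0}$-mass), Hoeffding--Azuma applied to the martingale $M_t=\sum_{i\leq t}(W_i-\E[W_i\mid\cF_{i-1}])$ with $W_i=\mu_{x_0}(V_{z_i}\cap U)\ind_{\{z_i\text{ is \textsc{nice}}\}}$ then gives $\mu_{x_0}(U)\leq 4\epsilon$ with probability at least $1-2n^{-2}$. The polynomial bound comes from the \emph{concentration of the average}, not from any individual term.
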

Indeed, modulo Theorem~\ref{thm:mu(Bt)}, for every fixed $\epsilon>0$, we may take $t_1$ and $K$ as in~\eqref{eq:t1-K-def}, and deduce (via a union bound) that, w.h.p., every $K$-root vertex $x_0$ satisfies $\mu_{x_0}(\Lambda_{t_1}) \leq \epsilon$. Therefore, $d_\tv(t_1+8K) \leq 2\epsilon + o(1)$ w.h.p.\ by~\eqref{eq-Bt-reduction}, yielding the desired upper bound in Proposition~\ref{prop:worst-case}.

\begin{figure}
\label{fig:BK(x0)}

\begin{tikzpicture}

 \node (fig1) at (0,0) {
	\includegraphics[width=0.9\textwidth]{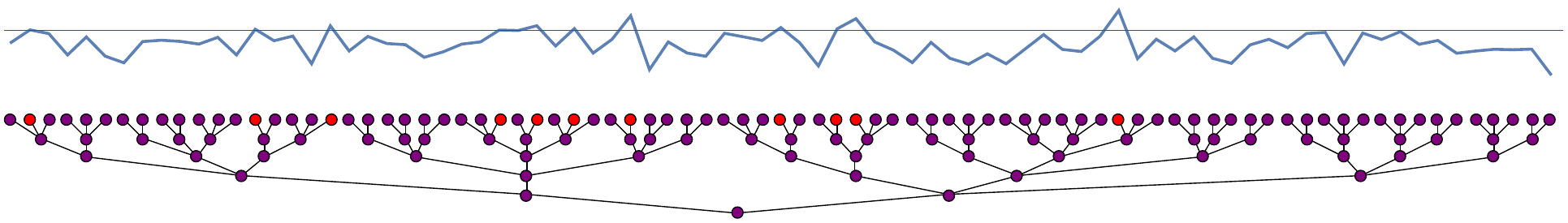}};
	
  \node at (-0.45,-1.2) { $x_0$ };
  \node[red,font=\small] at (6.7,0.7) { $\epsilon$ };
  \node[blue!0!black,font=\small] at (6.95,0.35) { $d_{\tv}^{(x)}(t)$ };

\end{tikzpicture}
\caption{The set $\Lambda_t \subset \partial B_K(x_0)$ marked in red, above which are  the corresponding values of $d_{\tv}^{(x)}(t)$ for $x\in\partial B_K(x_0)$, with a threshold at $\epsilon$.}
\end{figure}
\begin{proof}[\textbf{\emph{Proof of Theorem~\ref{thm:mu(Bt)}}}]

Throughout the proof of this theorem, let $\bP^{T_0}$ denote the conditional probability $\P(\cdot\mid B_K(x_0)=T_0)$, and similarly let $\bE^{T_0}$ denote the analogous conditional expectation.

For a given vertex $x\in V$, consider a breadth-first-search exploration of $G$ from $x$, as described in the proof of Lemma~\ref{lem:tree-excess}, with the following addition: upon matching a half-edge $(u,*)$ from the queue to some (uniformly chosen) unmatched half-edge $(v,*)$,  if the vertex $v$ had already been encountered ({\it i.e.}, $(v,*)$ is in the queue), we stop the exploration at $u$ and at $v$, and mark $u$ and $v$ as \exit\ vertices. Also, when $u$ is at distance $L$ from $x$ (for some integer $L$ to specified later), we stop the exploration at this vertex and mark it as an \exit\ vertex. Let $\Gamma_L(x)\subset B_L(x)$ denote the tree, rooted at $x$, that is obtained by running this process until no further exploration is allowed.

We first recall some key facts established in~\cite{BLPS} in the analysis of SRW on sparse irregular random graphs.
For a given graph $G=(V,E)$ on $n$ vertices and vertex $x\in V$,  an integer $t>0$ and constants $\gamma,\epsilon>0$,  define the event
\begin{equation}
   \label{eq:cA-def}
   \cA^G_x(t,\gamma,\epsilon) = \bigg\{ \sum_{y\in V} \left( \P_{x}^G(X_t = y) \wedge n^{-1} e^{\gamma\sqrt{\log n}}\right) \geq 1-\epsilon \bigg\}\,.
 \end{equation}
Note that, if $\gap$ is the spectral-gap of SRW on a graph $G$ (note that $\gap$ is w.h.p.\ uniformly bounded away from $0$, by the well-known fact that, given the assumption $p_1=p_2=0$, the random graph $G$ is w.h.p.\ an expander), then for large enough $n$,
\begin{equation}\label{eq:gap-boost} \cA^G_x(t_0,\gamma,\epsilon) \qquad\Rightarrow\qquad d_{\tv}^{(x)}\left(t_0+s \right) < 2\epsilon \quad\mbox{ for }\quad s = 2\gamma \gap^{-1}\sqrt{\log n}\,.\end{equation}
Indeed, on the event $\cA^G_x$, the sub-measure $\nu(y) := P^{t_0}(x,y) \wedge n^{-1}\exp(\gamma\sqrt{\log n})$ satisfies $\nu(G) \geq 1-\epsilon$, and using that $1/(\Delta n) \leq \pi(y)\leq \Delta/n$ for all $y$ (together with the assumption  $\Delta=\exp(o(\sqrt{\log n}))$) we get  $\|\nu/\pi - \nu(G)\|_{L^2(\pi)} \leq \exp((\gamma+o(1))\sqrt{\log n})$, so
\[d_{\tv}^{(x)}(t_0+s) \leq \epsilon + \|\nu P^s/\pi - 1 \|_{L^2(\pi)} \leq \epsilon + \exp(-(\gamma-o(1))\sqrt{\log n})) = \epsilon + o(1)\,.\]
 
It was shown in~\cite{BLPS} that, if $G$ is a random graph as specified in Theorem~\ref{mainthm:main} and $x$ is a uniformly chosen vertex in $G$, then for every $\epsilon>0$ there exist  $\gamma_1,\gamma_2>0$ such that
\begin{equation}\label{eq:t0}
 \PBig{\cA^G_x(t_0,\gamma_1,\epsilon)}\geq 1-\epsilon  \qquad\mbox{ for }\qquad  t_0 :=\left\lceil \bh_X^{-1} \left( \log n - \gamma_2 \sqrt{\log n}\right)\right\rceil \,.	
\end{equation}
(See~\cite[Eq.~(3.19)]{BLPS}, recalling $\bh_X = \nu \dimH$, where $\nu$ and $\dimH$ are respectively the speed and Hausdorff dimension of harmonic measure of SRW on a GW-tree with offspring distribution $Z$, and the definitions in~\cite[Eqs.~(3.1)--(3.3)]{BLPS}: our $\epsilon$ here replaces $\sqrt{5\epsilon}$ from that equation, and $\gamma_1$ and $\gamma_2$ replace $\frac43\gamma$ and $\frac78\gamma$, respectively.)
%
%Furthermore, there is a distribution over infinite trees, defined in~\cite{BLPS} w.r.t.\  $Z$ and a parameter $n$, dubbed \emph{truncated GW-trees} in~\cite{BLPS} and denoted by $\tilde\cT^{(n)}$ here,
%and there are constants $c_0, \gamma_3>0$  depending only on the law of $Z$, such that for
%%Moreover, by introducing an object referred to as \emph{truncated} GW-trees~\cite[\S3.1]{BLPS}, the authors of~\cite{BLPS} showed that for every offspring variable $Z$ there exist constants $c_0,\gamma_3>0$ depending only on the law of $Z$, such that for
Moreover, by introducing an object referred to as \emph{truncated} GW-trees~\cite[\S3.1]{BLPS}, it was shown there that for all $\varepsilon >0$, there exist $c_0,\gamma_3>0$, depending only on $\varepsilon$ and on the law of $Z$, such that, if
\begin{equation}
  \label{eq:def-L-R}
R := \big\lceil c \log \log n\big\rceil \,, \qquad L =\big\lceil \nu \big( t_0 + \bh_X^{-1} \gamma_3\sqrt{\log n}\big)\big\rceil \,,
\end{equation}
for some fixed $c\geq c_0$,
and  $x\in V$ is  fixed independently of $G$, then for large $n$ there exists a subtree $\Gamma'_L(x)  \subset \Gamma_L(x)$ (the \emph{truncation} of $\Gamma_L(x)$, in which the exploration is also stopped at vertices which the SRW is less likely to visit, with those vertices being marked as \exit\ vertices as well)  with
the following properties:
\begin{enumerate}[(i)]
\item \label{it-first-R-min-degree} The first $R$ levels of $ \Gamma'_L(x)$ and $\Gamma_L(x)$ are equal, and can be coupled w.h.p.\ to a standard GW-tree\footnote{To be precise, the offspring distribution of the root is different --- being given by $(p_k)$ vs.\ all other vertices where it is given by $(q_k)$ for $q_k \propto (k+1)p_{k+1}$ --- but this does not essentially change the proofs.}  with variable $Z$ (so every vertex there has at least $2$ offspring).

\noindent (See the definition of truncation in~\cite[\S3.1]{BLPS} and the coupling in~\cite[\S3.2]{BLPS}.)

\item\label{it-tree-size} The total size of the tree $ \Gamma'_L$ is at most $n \exp(-\frac14\gamma_1\sqrt{\log n})$.

\noindent (See~\cite[Eq.~(3.11)]{BLPS}, where $\ell_1$ equals $L$ from~\eqref{eq:def-L-R} for  $\gamma_3=\frac18 \gamma$ and $\gamma_1,\gamma_2$ as before.)

\item \label{it-coupling}
The SRW $(X_t)_{t=1}^{t_0}$  on $G$ from $x$ does not hit any \exit\ vertex before time $t_0$ with probability $1-O(\epsilon)$ and can be coupled to the SRW on a \emph{truncated} GW-tree with probability $1-O(\epsilon)$. Denote the event of such a successful coupling by $\Pi_{t_0}$. Furthermore,  $\Gamma'_L(x)$ can be constructed while revealing at most $n\exp(-\frac14\gamma_1\sqrt{\log n})$ vertices of $B_L(x)$.

\noindent (See the description of this coupling in~\cite[\S3.2]{BLPS} and the 
event $\Pi_k$ defined there.)
\item \label{it-implying-A}
There exists an event $\cE^G_x \in \sigma(\Gamma'_L(x))$ such that
\begin{equation}\label{eq:cE-def} \cE^G_x \subset \cA^G_x(t_0, \gamma_1,\epsilon)\qquad\mbox{ and }\qquad
    \P(\cE^G_x) \geq 1-\epsilon\,.
\end{equation}    

\noindent (See~\cite[\S3.4.1]{BLPS} and the event $\{ \P_G(\Upsilon) \geq 1-\sqrt{5\epsilon}\}$ there, which is our event $\cE^G_x$.)

\end{enumerate}

We claim that this analysis extends to the case where $x$, instead of being a fixed vertex (chosen independently of the random graph $G$), is taken to be a leaf of $B_K(x_0)$.

Consider the following exploration: initially, $B_K(x_0)$ is fully exposed, and we assume $B_K(x_0)=T_0$ for some tree $T_0$. Now, for $x\in\partial T_0$, we may expose the truncated tree $\Gamma'_L(x)$ just as in~\cite{BLPS}, the only difference being that some part of this tree, namely $T_0$, is assumed to be exposed already at the beginning (no truncation will occur there). We will also consider a variant of this exposure process: given $B_K(x_0)=T_0$, for $x\in\partial T_0$ with ancestor $z\in\partial B_{\lfloor K/2\rfloor}(x_0)$, we sequentially expose the neighborhood of $x$, stopping the exploration at $u$ when we are about to create a cycle ({\it i.e.}, a half-edge $(u,*)$ is matched to a previously encountered vertex) or when $u$ meets the truncation criterion~\cite[\S3.1]{BLPS}, just as for $\Gamma'_L(x)$, but in addition, we also stop the exploration at every $u\in \partial T_0$ which is not a descendant of $z$ (and, just as before, mark every such vertex as an \exit\ vertex). 
Let $\widetilde\Gamma_L(x)$ be the subtree obtained this way. An illustration of this exposure process is depicted in Figure~\ref{fig:exposure}.
\begin{figure}
\label{fig:exposure}

\tikzstyle{level 1}=[sibling angle=120]
\tikzstyle{level 2}=[sibling angle=60]
\tikzstyle{level 3}=[sibling angle=40]
\tikzstyle{level 4}=[sibling angle=20]

\begin{tikzpicture}[scale=0.5,sibling distance=8em,grow cyclic,
  every node/.style = {fill,circle,minimum size=1.7mm,inner sep=0}]
  \node[label={[xshift=0.2cm, yshift=0.01cm]\large{$x_0$}}] { }
    child { node { } 
       child { node[green!70!black,star,minimum size=0.3cm] { } 
           child { node { } 
              child { node { } }
              child { node { } }
              child { node { } }
              }
           child { node { } 
              child { node { } }
              child { node { } }
              }
           }        
       child { node[green!70!black,star,minimum size=0.3cm] { } 
           child { node { } 
              child { node { } }
              child { node { } }
              }
           child { node { } 
              child { node { } }
              child { node { } }
              child { node { } }
              }
           }
       child { node[green!70!black,star,minimum size=0.3cm] { } 
           child { node { } 
              child { node { } }
              child { node { } }
              child { node { } }
              }
           child { node { } 
              child { node { } }
              child { node { } }
              }
           child { node { } 
              child { node { } }
              child { node { } }
              }
           }
        }
    child { node { } 
       child { node[green!70!black,star,minimum size=0.3cm] { } 
           child { node { } 
              child { node { } }
              child { node { } }
              }
           child { node { } 
              child { node { } }
              child { node { } }
              child { node { } }
              }
           child { node { } 
              child { node { } }
              child { node { } }
              }
           }
       child { node[green!70!black,label={[xshift=-0.05cm, yshift=0.1cm]%\Large
       {\color{green!60!black}$z$}},star,minimum size=0.3cm] { } 
           child { node { } 
              child { node { } child[red!80!black,dashed]{child[red!80!black,dashed]{child[red!80!black,dashed]{}}} }
              child { node { } child[red!80!black,dashed]{child[red!80!black,dashed]{child[red!80!black,dashed]{}}} }
              child { node { } child[red!80!black,dashed]{child[red!80!black,dashed]{child[red!80!black,dashed]{}}} }
              }
           child { node { } 
              child { node { } child[red!80!black,dashed]{child[red!80!black,dashed]{child[red!80!black,dashed]{}}} }
              child { node[blue!80!black,label={[xshift=-0.3cm,yshift=-0.2cm]%\Large
              {\color{blue!80!black}$x$}}] { } child[red!80!black,dashed]{child[red!80!black,dashed]{child[red!80!black,dashed]{}}} }
              }
           }
       }
    child { node { } 
       child { node[green!70!black,star,minimum size=0.3cm] { } 
           child { node { } 
              child { node { } }
              child { node { } }
              }
           child { node { } 
              child { node { } }
              child { node { } }
              }
           child { node { } 
              child { node { } }
              child { node { } }
              child { node { } }
              }
           }
       child { node[green!70!black,star,minimum size=0.3cm] { } 
           child { node { } 
              child { node { } }
              child { node { } }
              child { node { } }
              }
           child { node { } 
              child { node { } }
              child { node { } }
              }
           }
       };
  \draw[dotted,red] (7.,7.2) arc (70:-20:5.7);
  
  \node[white] at (12,5) {\Large\color{red!80!black}$\widetilde\Gamma_L(x)$}; 
    \node[white] at (-2.5,0.3) {\Large\color{black} $T_0$};
\end{tikzpicture}

\caption{The tree $T_0=\partial B_K(x_0)$ with $K=4$. }
 
 \end{figure}
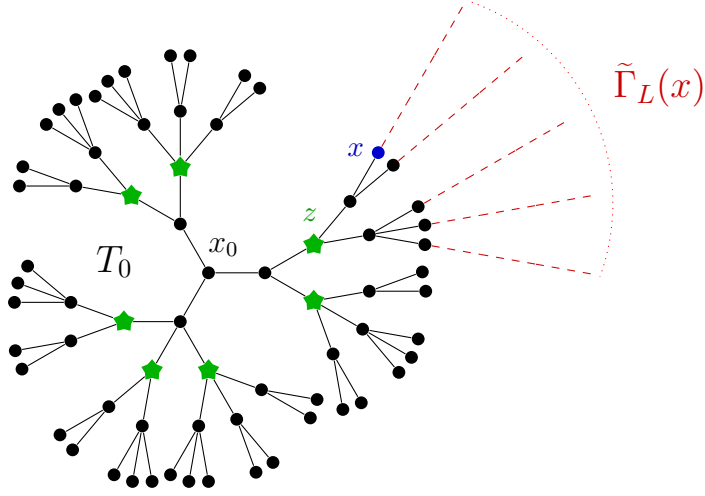
In what follows, when referring to the $\sigma$-fields generated by  $\Gamma'_L(x)$ and $\widetilde\Gamma_L(x)$ we include the information of which  vertices are marked \exit\ (in our setting of $p_1=0$ per~\eqref{eq-Z-min-degree-hypo}, these are but the leaves in these trees).

\begin{lemma}\label{lem:mu-Lambda-exp}
Let $\epsilon>0$, let $t_0$ and $\gamma_1$ as given in~\eqref{eq:t0}, and let $T_0$ be a tree such that $\P(B_K(x_0)=T_0)>0$. For every $x\in\partial T_0$ there exists an event $\cE^G_x\in\sigma\left(\Gamma'_L(x)\right)$ such that 
\begin{equation}
\label{eq:Ax-leaf-x-weaker}
\cE^G_x \subset \cA^G_x(t_0, \gamma_1,\epsilon)\qquad\mbox{ and }\qquad
    \bP^{T_0}\left(\cE^G_x\right) \geq 1-\epsilon\,.
\end{equation}
Moreover, there exists an event $\widetilde\cE^G_x\in\sigma\big(\widetilde\Gamma_L(x)\big)$ such that 
\begin{equation}
\label{eq:Ax-leaf-x}
\widetilde\cE^G_x \subset \cA^G_x(t_0, \gamma_1,2\epsilon)\qquad\mbox{ and }\qquad
    \bP^{T_0}\big(\widetilde\cE^G_x\big) \geq 1-2\epsilon\,.
\end{equation}
\end{lemma}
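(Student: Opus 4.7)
The plan is to reuse the construction of~\cite{BLPS} verbatim for the first statement, and to build $\widetilde\cE^G_x$ from $\cE^G_x$ by adding a gambler's-ruin estimate that controls the additional exit vertices of $\widetilde\Gamma_L(x)$ relative to $\Gamma'_L(x)$.

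For $\cE^G_x$, I would take exactly the event $\{\P_G(\Upsilon)\geq 1-\sqrt{5\epsilon}\}$ built in~\cite[\S3.4.1]{BLPS}, but now constructed in the setting where $B_K(x_0)=T_0$ is pre-exposed. The only effect of the conditioning is that the $|T_0|-1$ edges of $T_0$, involving at most $2\Delta^{K+1}=n^{o(1)}$ half-edges, are pre-matched before the BFS from $x$ begins. Because this is a vanishingly small fraction of the $\Theta(n)$ total half-edges, each half-edge pairing revealed in the BFS from $x$ is uniform over a set that agrees with the unconditional one up to a multiplicative $(1+o(1))$ factor. The bookkeeping in~\cite[\S3.1--3.4]{BLPS}---giving properties~(i)--(iv) listed before the lemma---then carries through verbatim, yielding $\cE^G_x\in\sigma(\Gamma'_L(x))$ with $\cE^G_x\subset \cA^G_x(t_0,\gamma_1,\epsilon)$ and $\bP^{T_0}(\cE^G_x)\geq 1-\epsilon$.

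For $\widetilde\cE^G_x$, I would define
\[
\widetilde\cE^G_x = \bigl\{\, \P_G(\widetilde\Upsilon) \geq 1 - \sqrt{10\epsilon}\,\bigr\}\,,
\]
where $\widetilde\Upsilon$ denotes successful coupling of SRW on $G$ with walk on $\widetilde\Gamma_L(x)$ for $t\leq t_0$, whose exit set now includes the extra vertices in $\partial T_0\setminus\{\text{descendants of }z\}$. This event is $\sigma(\widetilde\Gamma_L(x))$-measurable by construction. Since $\widetilde\Gamma_L(x)$ inherits all exits of $\Gamma'_L(x)$ and adds a few more, $\widetilde\Upsilon = \Upsilon \cap \{\text{SRW avoids the additional exits in }[0,t_0]\}$, which gives the deterministic inequality
\[
\P_G(\widetilde\Upsilon) \;\geq\; \P_G(\Upsilon) \;-\; \P\bigl(\text{walk on }\Gamma'_L(x)\text{ hits an ancestor of }z\text{ in }[0,t_0]\bigr).
\]
Every additional exit is a leaf of $T_0$ whose path from $x$ must cross some proper ancestor of $z$, and these ancestors lie at distance $\geq K/2+1$ from $x$ in $\Gamma'_L(x)$. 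Rooting $\Gamma'_L(x)$ at any such ancestor and tracking the walk's depth yields a birth-death walk on $\Z_{\geq 0}$ whose downward step probability is $1/\deg_G(v)\leq 1/3$ at every interior vertex (by the minimum-degree-$3$ hypothesis); gambler's ruin then gives a hitting probability of at most $(1/2)^{K/2+1}$ per ancestor, and summing over the at most $K/2$ proper ancestors of $z$ yields a total bound of $2^{-K/2}$. Since $K\to\infty$ this is $o(1)\ll\epsilon$, so on $\cE^G_x$ we have $\P_G(\widetilde\Upsilon)\geq 1-\sqrt{5\epsilon}-o(1)\geq 1-\sqrt{10\epsilon}$ for $n$ large; hence $\cE^G_x\subset\widetilde\cE^G_x$ and $\bP^{T_0}(\widetilde\cE^G_x)\geq 1-\epsilon\geq 1-2\epsilon$. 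The inclusion $\widetilde\cE^G_x\subset\cA^G_x(t_0,\gamma_1,2\epsilon)$ follows by rerunning the spreading estimate of~\cite[\S3.4]{BLPS} on $\widetilde\Gamma_L(x)$ with the relaxed threshold $\sqrt{10\epsilon}=\sqrt{5\cdot 2\epsilon}$ in place of $\sqrt{5\epsilon}$.

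The main obstacle is transferring the gambler's-ruin estimate, which is a pure-tree computation, over to SRW on $G$. I plan to handle this through the existing $\Gamma'_L(x)$-coupling, accepting the extra $(1-\P_G(\Upsilon))$ coupling-failure term that is absorbed into the inflated $\sqrt{10\epsilon}$ budget. A secondary technical point is ensuring that the conditioning on $B_K(x_0)=T_0$ does not spoil the entropy and speed estimates of~\cite{BLPS}; this is where the crude bound $|T_0|=n^{o(1)}$---a consequence of $K=O(\log\log n)$ and $\Delta=\exp[(\log n)^{1/2-\delta}]$---does its job.
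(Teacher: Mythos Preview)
Your overall strategy matches the paper's: adapt the~\cite{BLPS} construction for $\cE^G_x$, then build $\widetilde\cE^G_x$ by intersecting with the event that the walk avoids $B_{\lfloor K/2\rfloor}(x_0)$, bounding the latter by gambler's ruin. The second half is essentially the paper's argument---there $\widetilde\cE^G_x=\{\P^G_x(\Upsilon\cap\{\tilde\tau>t_0\})>1-2\epsilon\}$ with $\tilde\tau$ the hitting time of $B_{\lfloor K/2\rfloor}(x_0)$, and $\bP^{T_0}(\tilde\tau\leq t_0)=o(1)$ via the biased-walk domination. (Your sum over all proper ancestors of $z$ is unnecessary: reaching any additional exit forces the walk through the parent of $z$, so a single $(1/2)^{K/2}$ bound suffices. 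Also, you define $\widetilde\Upsilon$ first as a bare coupling event and then as $\Upsilon\cap\{\cdots\}$; the latter is what you need for the inclusion into $\cA^G_x$, and its $\sigma(\widetilde\Gamma_L(x))$-measurability then deserves a word.)

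The first half has a genuine gap. Your half-edge-counting argument addresses the \emph{coupling} event $\Pi_{t_0}$ (cycles, truncation, degree mismatches are all controlled by the number of exposed half-edges), but $\cE^G_x=\{\P_G(\Upsilon)\geq 1-\sqrt{5\epsilon}\}$ also requires the events $\Upsilon_1$ (the walk's distance at time $t_0$ lies in the typical window) and $\Upsilon_3$ (its loop-erased position lies in the good set $S''_{\ell_0}$). These are statements about SRW on the tree $\Gamma'_L(x)$, which here is $T_0$ with truncated GW-trees hanging off \emph{all} its leaves---not a single truncated GW-tree rooted at $x$---so the~\cite{BLPS} estimates do not apply ``verbatim.'' The paper fills this by noting that, since every vertex of $T_0$ has at least two offspring, the walk from $x$ reaches a regeneration point below level $K$ in expected time $O(K)$, after which it evolves as SRW on a fresh truncated GW-tree; the $\Upsilon_1,\Upsilon_3$ estimates then transfer modulo a negligible $O(K^2)$ time shift. (It also adjusts $R$ in~\eqref{eq:def-L-R} to be at least $2K$ so that $T_0\subset B_R(x)$ sits entirely in the untruncated portion.) Your remark that ``$|T_0|=n^{o(1)}$ does its job'' does not supply this argument.
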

\begin{proof}
To prove \eqref{eq:Ax-leaf-x-weaker}, we describe in more detail the arguments used in~\cite{BLPS} to derive the aforementioned facts (i)--(iv), and how these are affected by taking $x\in\partial B_K(x_0)$. 
\begin{enumerate}[(i)]
\item The truncated GW-tree from~\cite{BLPS} is a subtree of the standard GW-tree with variable $Z$, where the exploration of each level in the tree may skip certain vertices (where we say a truncation occurs), depending on a certain criterion (see~\cite[Eq.~(3.8)]{BLPS}), which is measurable w.r.t.\ to the currently exposed subtree. No truncation occurs in the first $R$ levels, immediately implying Property~\eqref{it-first-R-min-degree}. 	

  Using $c = c_0 \vee 2\gamma_\star$ for the definition of $R$ in~\eqref{eq:def-L-R}, with $\gamma_\star$ from the definition of~$K$ in~\eqref{eq:t1-K-def},  indeed $B_K(x_0) \subset B_R(x)$ in $G$, which is not truncated by definition.
     
 \item Property~\eqref{it-tree-size} was a consequence of the truncation criterion, which censors the parts of the tree  where the random walk is less likely to visit; as such, the more likely subset of each level of the tree cannot be too large by conservation of mass. 

   Having merely adjusted  $R$ to be at least $2K$, this bound remains valid unchanged.

\item The coupling---now to SRW on a tree rooted at $x$, containing $B_K(x_0)$ with truncated GW-trees at $x$ and all its leaves---is successful unless one of the following occurs:
 \begin{enumerate}[(a)]
 \item  \emph{cycles}: SRW on $G$ encounters a vertex at which exploration has been stopped because of a cycle (thereby hitting a vertex already marked \exit\ in $\Gamma_L(x)$): this 
 is controlled by the size of $\Gamma'_L(x)$, and is thus unchanged (for the new $R$);
  \item \emph{truncation} --- SRW on $G$ visits a truncated vertex (thereby hitting an \exit\ vertex of $\Gamma'_L(x)$): this is controlled by the truncation criterion, thus unchanged (for the new $R$);
\item \emph{degrees}: SRW on $G$ hits a vertex whose degree is inconsistent with its analog in the GW-tree: again, this is controlled by the size of $\Gamma'_L(x)$, thus unchanged.
 \end{enumerate}
 Note that encountering the truncation event (a) along the random walk up to time $t_0$ had probability $O(\epsilon)$, whereas  each of the other two events had probability $o(1)$ (in every given time step the events (b) and (c) had probability $\exp(-c\sqrt{\log n})$, outweighing a union bound over $t_0 = O(\log n)$ steps).

\item \label{it:event-upsilon} The event $\cE^G_x$  was given by $\{ \P^G_x(\Upsilon) \geq 1-\varepsilon\}$ where $\Upsilon$ was the intersection of $\Pi_{t_0}$ (which was denoted $\Upsilon_2$ in~\cite{BLPS}) with two events, each occurring w.h.p.: 
 \begin{itemize}
 \item	$\Upsilon_1$ said that the distance of SRW on the standard GW-tree, at the target time $t_0$, is within a given number of standard deviations from its mean; 
 \item $\Upsilon_3$ said that the (loop-erasure of) SRW on $\Gamma'_L(x)$ at time $t_0$ will belong to the subtree of a given set of vertices (denoted $S''_{\ell_0}$ in that paper), where the probability that SRW visits any given vertex in it is at most $n^{-1}\exp(\gamma_1\sqrt{\log n})$.
 \end{itemize}

The event $\Pi_{t_0}$ was addressed in the previous item. As for the two events $\Upsilon_1, \Upsilon_3$, each occurs w.h.p.\ also in the situation where the truncated GW-tree rooted at $x$ is further attached to a tree given by $B_K(x_0)$ (with additional truncated GW-trees rooted at all other vertices of $\partial B_K(x_0)$), as a consequence of the fact that each vertex of $B_K(x_0)$  has at least 2 offspring. This implies that SRW will encounter a regeneration point below level $K$ by a time period that is $O(K)$ in expectation. In particular, for each of the above events, the estimates of SRW on the (standard) truncated GW-tree apply, once we ignore an additive time shift of, say, $K^2$ steps. 
\end{enumerate}
Altogether, we have verified~\eqref{eq:Ax-leaf-x-weaker}. Now to prove \eqref{eq:Ax-leaf-x}, we have to verify an additional property allowing us to consider an event that is measurable with respect to $\widetilde\Gamma_L(x)$. To this end, let us define
\[
\tilde\tau = \inf\left\{ t>0 \,:\; X_t\in B_{\lfloor K/2\rfloor}(x_0)  \right\}\, ,
\]
and consider the event
\[
\widetilde\cE^G_x = \left\{ \P^G_x\left(\Upsilon\cap\{\tilde\tau> t_0\}\right)>1-2\varepsilon\right\}\, ,
\]
where the event $\Upsilon$ is from item~\eqref{it:event-upsilon}. On the event $\Upsilon\cap\{\tilde\tau> t_0\}$, the walk does not  hit any \exit\ vertex of $\Gamma'_L(x)$ nor does it visit $B_{\lfloor K/2\rfloor}(x_0)$ before time $t_0$. In particular, it does not hit any \exit\ vertex $\widetilde\Gamma_L(x)$ before time $t_0$ and $\widetilde\cE^G_x\in \sigma\left(\widetilde\Gamma_L(x)\right)$. Moreover,
\[
\widetilde\cE^G_x \subset \left\{ \P^G_x\left(\Upsilon\right)>1-2\varepsilon\right\} \subset \cA^G_x(t_0,\gamma_1, 2\varepsilon)\, .
\]
Let us now show that 
\begin{equation}
\label{eq:prob-tildeA}
\bP^{T_0}(\widetilde\cE^G_x)\geq 1-2\varepsilon\, .
\end{equation}
Using two successive union bounds (one on $\P^G_x$, the other on $\bP^{T_0}$), we have
\[
\bP^{T_0}\big(\widetilde\cE^G_x\big) \geq   \bP^{T_0}\left(\P^G_x(\Upsilon)\geq 1-\varepsilon\right)- \bP^{T_0}\left(\P^G_x(\tilde\tau\leq t_0)>\varepsilon\right)\, .
\]
On the one hand, by~\eqref{eq:Ax-leaf-x-weaker}, we know that $\bP^{T_0}\left(\P^G_x(\Upsilon)\geq 1-\varepsilon\right)=\bP^{T_0}\left(\cE^G_x\right)\geq 1-\varepsilon$. On the other hand, by Markov's Inequality,
\[
\bP^{T_0}\left(\P_x^G(\widetilde\tau\leq t_0)>\varepsilon)\right)\leq \frac{\bP^{T_0}(\widetilde\tau\leq t_0)}{\varepsilon}\, \cdot
\]
Since each vertex in the tree $T_0$ has at least 2 offspring, $\rho_t := \dist(X_t,x_0)$ dominates a one-dimensional biased random walk with increment $\xi$ given by $\P(\xi=1)=\frac23$ and $\P(\xi=-1)=\frac13$. It then follows  ({\it e.g.}, via the exponential martingale $2^{-\rho_t}$) that
\[ \sup_{ y\in\partial B_{\lceil 3K/4\rceil }(x_0)}\bP^{T_0}_y\left( \tau_{\lfloor K/2\rfloor} < \tau_{K} \right) \leq 2^{-K/4}\]
for
\[ \tau_\ell = \min\{ t : \rho_t =\ell\}\,,\]
with which we can afford a union bound over at most $C\log n$ time-points serving as the potentially first visit to $\partial B_{\lceil 3K/4\rceil}(x_0)$ before encountering the event $\tau_{\lfloor K/2\rfloor}<\tau_K$, yielding $\bP^{T_0}(\tau_{\lfloor K/2\rfloor}\leq t_0)=o(1)$, and thus, for $n$ large enough,
\[
\bP^{T_0}\left(\P_x^G(\widetilde\tau\leq t_0)>\varepsilon)\right)\leq \varepsilon\, ,
\]
establishing~\eqref{eq:prob-tildeA} and concluding the proof of Lemma~\ref{lem:mu-Lambda-exp}. 
\end{proof}

Our next goal is showing that $\mu_{x_0}(\Lambda_{t_1})$ is concentrated around its mean with an exponential tail. (Recall that $\Lambda_{t_1} \subset \partial B_K(x_0)$ and  $|\partial B_K(x_0)| \geq \log^c n$ for some $c>0$, whereas our target error probability in Theorem~\ref{thm:mu(Bt)} is $O(n^{-2})$.)

To this end, let $\cS=\partial B_{\lfloor K/2\rfloor }(x_0)$ and $z_1,\dots, z_{|\cS|}$ an ordering of the elements of $\cS$ and let
\[
 V_{z_i}= \{x\in \partial T_0,\; \mbox{$x$ is a descendant of $z_i$}\}\, .
 \] 
Sequentially for $i$ going from $1$ to $|\cS|$ and for $x\in V_{z_i}$, we expose the trees $\widetilde\Gamma_L(x)$ (in particular, if upon matching half edge $(u,*)$ during the exposure of $\widetilde\Gamma_L(x)$, we happen to reach a vertex in the boundary of the previously exposed trees, then we stop the exploration at $u$). While doing so, if at some stage $i$, we attempt to match half-edge $(u,*)$ with a half-edge $(v,*)$ where $v\in V_{z_j}$ for some $j>i$, then we mark $v$ as \red\;.

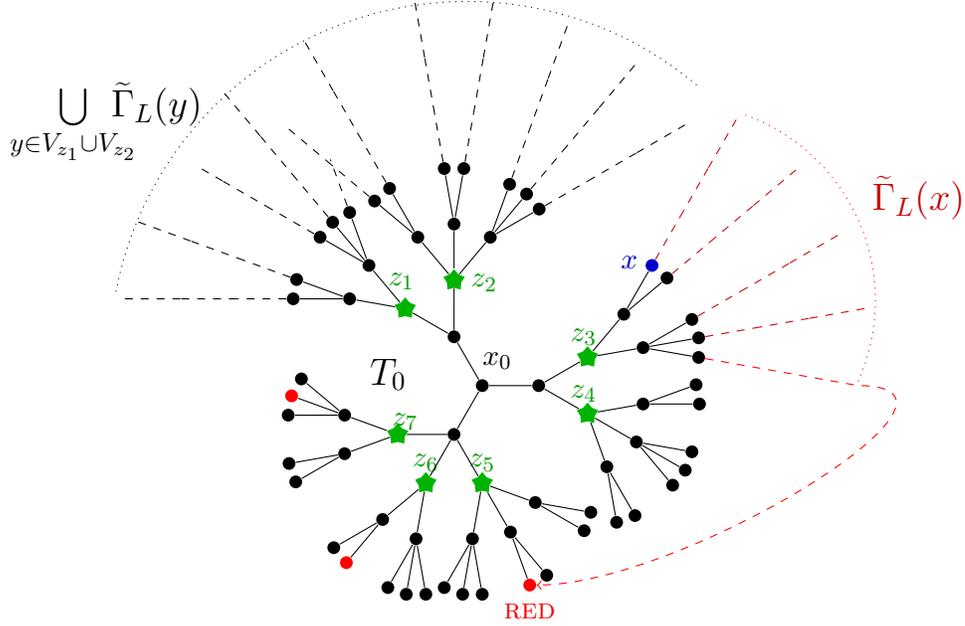
\begin{figure}
\label{fig:exposure2}

\tikzstyle{level 1}=[sibling angle=120]
\tikzstyle{level 2}=[sibling angle=60]
\tikzstyle{level 3}=[sibling angle=40]
\tikzstyle{level 4}=[sibling angle=20]

\begin{tikzpicture}[scale=0.5,sibling distance=8em,grow cyclic,
  every node/.style = {fill,circle,minimum size=1.7mm,inner sep=0}]
  \node[label={[xshift=0.2cm, yshift=0.01cm]%\Large
  {$x_0$}}] { }
    child { node { } 
       child { node[green!70!black,label={[xshift=0.1cm, yshift=-0.2cm]%\Large
       {\color{green!60!black}$z_7$}},star,minimum size=0.3cm] { } 
           child { node { } 
              child { node { } }
              child { node[red,minimum size=1.7mm] { } }
              child { node { } }
              }
           child { node { } 
              child { node { } }
              child { node { } }
              }
           }        
       child { node[green!70!black,label={[xshift=0, yshift=-0.1cm]%\Large
       {\color{green!60!black}$z_6$}},star,minimum size=0.3cm] { } 
           child { node { } 
              child { node { } }
              child { node[red,minimum size=1.7mm] { } }
              }
           child { node { } 
              child { node { } }
              child { node { } }
              child { node { } }
              }
           }
       child { node[green!70!black,label={[xshift=0, yshift=-0.1cm]%\Large
       {\color{green!60!black}$z_5$}},star,minimum size=0.3cm] { } 
           child { node { } 
              child { node { } }
              child { node { } }
              child { node { } }
              }
           child { node { } 
              child { node[red,label={[yshift=-0.8cm]$\color{red}\textsc{red}$},minimum size=1.7mm] (B) { } }
              child { node { } }
              }
           child { node { } 
              child { node { } }
              child { node { } }
              }
           }
        }
    child { node { } 
       child { node[green!70!black,label={[xshift=-0.05cm, yshift=-0.1cm]%\Large
       {\color{green!60!black}$z_4$}},star,minimum size=0.3cm] { } 
           child { node { } 
              child { node { } }
              child { node { } }
              }
           child { node { } 
              child { node { } }
              child { node { } }
              child { node { } }
              }
           child { node { } 
              child { node { } }
              child { node { } }
              }
           }
       child { node[green!70!black,label={[xshift=-0.05cm, yshift=-0.1cm]%\Large
       {\color{green!60!black}$z_3$}},star,minimum size=0.3cm] { } 
           child { node { } 
              child { node (A) { }  }
              child { node { } child[red!80!black,dashed]{child[red!80!black,dashed]{child[red!80!black,dashed]{}}} }
              child { node { } child[red!80!black,dashed]{child[red!80!black,dashed]{child[red!80!black,dashed]{}}} }
              }
           child { node { } 
              child { node { } child[red!80!black,dashed]{child[red!80!black,dashed]{child[red!80!black,dashed]{}}} }
              child { node[blue!80!black,label={[xshift=-0.3cm,yshift=-0.2cm]%\Large
              {\color{blue!80!black}$x$}}] { } child[red!80!black,dashed]{child[red!80!black,dashed]{child[red!80!black,dashed]{}}} }
              }
           }
       }
    child { node { } 
       child { node[green!70!black,label={[xshift=0.4cm, yshift=-0.4cm]%\Large
       {\color{green!60!black}$z_2$}},star,minimum size=0.3cm] { } 
           child { node { } 
              child { node { } child[black,dashed]{child[black,dashed]{child[black,dashed]{}}}}
              child { node { } child[black,dashed]{child[black,dashed]{child[black,dashed]{}}}}
              child { node { } child[black,dashed]{child[black,dashed]{child[black,dashed]{}}}}
              }
           child { node { } 
              child { node { } child[black,dashed]{child[black,dashed]{child[black,dashed]{}}} }
              child { node { } child[black,dashed]{child[black,dashed]{child[black,dashed]{}}}}
              }
           child { node { } 
              child { node { } child[black,dashed]{child[black,dashed]{child[black,dashed]{}}}}
              child { node { } child[black,dashed]{child[black,dashed]{}} }
              }
           }
       child { node[green!70!black,label={[xshift=-0.05cm, yshift=-0.3]%\Large
       {\color{green!60!black}$z_1$}},star,minimum size=0.3cm] { } 
           child { node { } 
              child { node { } child[black,dashed]{} }
              child { node { } child[black,dashed]{child[black,dashed]{child[black,dashed]{}}} }
              child { node { }child[black,dashed]{child[black,dashed]{child[black,dashed]{}}} }
              }
           child { node { } 
              child { node { } child[black,dashed]{child[black,dashed]{child[black,dashed]{}}}}
              child { node { } child[black,dashed]{child[black,dashed]{child[black,dashed]{}}} }
              }
           }
       };
  \node[white] at (11.6,5) {\Large\color{red!80!black}$\widetilde\Gamma_L(x)$}; 
    \node[white] at (-2.5,0.3) {\Large\color{black} $T_0$};
    \node[white] at (-10,7) {\Large\color{black} $\displaystyle{\bigcup_{y\in V_{z_1}\cup V_{z_2}}\!\!\!\!\!\widetilde\Gamma_L(y)}$}; 
    \draw [->,red,dashed] (A) to (10,0) to [out=0,in=0] (B);
    \draw[dotted,red] (7.,7.2) arc (70:-25:5.25);
    \draw[dotted,black] (5.7,8.) arc (50:170:9.5);  
\end{tikzpicture}

\caption{The $i^{\mbox{th}}$ stage of the sequential exposure process ($i=3$) with the \red\ coloring of vertices.}
 
 \end{figure}

\begin{lemma}
\label{lem:number-reds}
Let $\bR_L$ be the number of \red\ vertices in $\partial T_0$ at the end of the exploration process. Then, for $\gamma_1$ as given in~\eqref{eq:t0} and every sufficiently large $n$,
\[
\bP^{T_0}\left(\bR_L >15\gamma_1^{-1}\sqrt{\log n}\right) < n^{-2}\, .
\]
\end{lemma}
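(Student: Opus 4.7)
The plan is to show that the sequential exposure makes only a small total number $N$ of half-edge matchings, each of which creates a red vertex with conditional probability $\exp(o(\sqrt{\log n}))/n$, and then to stochastically dominate $\bR_L$ by a binomial random variable and apply a standard Chernoff-type bound. The two key ingredients are a deterministic upper bound on the total exposure and a uniform upper bound on the per-match probability of a collision with a ``future'' half-edge.

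First I will bound $N$ deterministically. By Property~(\ref{it-tree-size}), which still applies here since the additional stopping rules (cycles, truncation, boundary of previously exposed trees, and leaves of $T_0$ outside the subtree rooted at $z$) can only shrink the exposed trees, each $\widetilde\Gamma_L(x)$ has size at most $n\exp(-\tfrac14\gamma_1\sqrt{\log n})$. Since $K=\lceil\gamma_\star\log\log n\rceil$ and $\Delta\le\exp[(\log n)^{1/2-\delta}]$, we have $|\partial T_0|\le\Delta^{K}=\exp(o(\sqrt{\log n}))$, so summing over $x\in\partial T_0$ yields
$$
N \;\le\; |\partial T_0|\cdot n\exp\bigl(-\tfrac{\gamma_1}{4}\sqrt{\log n}\bigr) \;=\; n\exp\bigl(-(\tfrac{\gamma_1}{4}-o(1))\sqrt{\log n}\bigr).
$$

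Second, I will bound the per-match red probability. At stage $i$, only unmatched half-edges attached to the not-yet-processed leaves of $T_0$, i.e., to vertices in $\bigcup_{j>i}V_{z_j}$, can produce a red vertex. Each such leaf has at most $\Delta-1$ half-edges outside $T_0$, so the number of red-producing half-edges is at most $|\partial T_0|(\Delta-1)\le\Delta^{K+1}=\exp(o(\sqrt{\log n}))$, while the total count of unmatched half-edges in $G$ stays at least $3n-2N=(3-o(1))n$ throughout the exposure (since $\sum_u D_u\ge 3n$ by $p_1=p_2=0$). Hence, conditional on the history of the exposure, each match creates a red vertex with probability at most $p:=\exp(o(\sqrt{\log n}))/n$.

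Combining the two estimates, a standard sequential coupling gives $\bR_L\le_{\mathrm{st}}\Bin(N,p)$, with $Np=\exp\bigl(-\tfrac{\gamma_1}{4}\sqrt{\log n}+o(\sqrt{\log n})\bigr)$. Taking $k=\lceil15\gamma_1^{-1}\sqrt{\log n}\rceil$ and applying the binomial upper tail $\P(\Bin(N,p)\ge k)\le(eNp/k)^k$ yields
$$
\log\bP^{T_0}(\bR_L>k)\;\le\;k\log(eNp/k)\;\le\;-\tfrac{15}{4}\log n\,(1+o(1)),
$$
which is below $-2\log n$ for $n$ sufficiently large. I do not foresee any real obstacle; the main points requiring care are checking that the deterministic tree-size bound from~\cite{BLPS} transfers to $\widetilde\Gamma_L$ (which follows by monotonicity under the extra stopping rules) and that the ``future unmatched half-edge'' count is indeed bounded by $\Delta^{K+1}$ uniformly along the exposure filtration.
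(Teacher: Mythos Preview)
Your proof is correct and follows essentially the same approach as the paper's: bound the total number of matchings by $n\exp(-c\gamma_1\sqrt{\log n})$ using the truncation size estimate, bound the number of ``dangerous'' half-edges on $\partial T_0$ by $\Delta^{K+1}=\exp(o(\sqrt{\log n}))$, and conclude via a binomial tail bound. The paper uses the cruder $\binom{a}{b}\le a^b$ in place of your $(eNp/k)^k$ and absorbs the $|\partial T_0|$ factor into a constant $1/5$ rather than $1/4-o(1)$, but the argument is otherwise the same.
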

\begin{proof}
By our assumption on the maximum degree, at any stage of the exploration process, the number of unmatched half-edges attached to a vertex of $\partial T_0$ is smaller than $\Delta^{K+1}\leq \exp\left((\log n)^{\frac{1-\delta}{2}}\right)$ for large enough $n$. Now recall that the truncation ensures that each of the trees $\widetilde\Gamma_L(x)$ has size at most $n\exp\left(-\frac{1}{4}\gamma_1\sqrt{\log n}\right)$, implying that the total number of pairs formed during the exploration process is at most $n\exp\left(-\frac{1}{5}\gamma_1\sqrt{\log n}\right)$. All in all, the probability that at least $C\sqrt{\log n}$ times an unmatched half-edge attached to a vertex of $\partial T_0$ is chosen as target during the process is smaller than
\[
{n\exp\left(-\frac{1}{5}\gamma_1\sqrt{\log n}\right) \choose C\sqrt{\log n}} \left(\frac{\exp\left[(\log n)^{\frac{1-\delta}{2}}\right]}{(3-o(1))n}\right)^{C\sqrt{\log n}}  \, .
\]
Using that ${a\choose b}\leq a^b$ for all $a,b\geq 0$, we see that
\[
\bP^{T_0}\left(\bR_L >C\sqrt{\log n}\right)\leq \exp\left(-\frac{C\gamma_1}{5}\log n +C(\log n)^{1-\frac{\delta}{2}}\right)\, ,
\]
which is smaller than $n^{-2}$ for $C\geq15/\gamma_1$ and large enough $n$.
\end{proof}

Let us denote by $\cF_0$ the $\sigma$-field generated by $T_0$, and for $i=1,\dots, |\cS|$, let $\cF_i$ be the $\sigma$-field generated by $T_0$ and $\bigcup_{j=1}^i\bigcup_{x\in V_{z_j}}\widetilde\Gamma_L(x)$, together with the \red\ coloring of vertices of $\partial T_0$ up to stage  $i$. We say that a vertex $z\in \cS$ is \nice\ if none of its descendants in $\partial T_0$ is colored \red. Note that the event $\{\mbox{$z_i$ is \nice}\}$ belongs to $\cF_{i-1}$.

\begin{lemma}
\label{lem:exp-cond}
For $i\geq 1$, consider running the exploration process up to stage $i-1$, and let $x\in\partial T_0$ be a descendant of $z_i$. Then
\[
\P\left({\widetilde\cE}^G_x \mid\ \cF_{i-1}\right)\geq \ind_{\{\mbox{$z_i$ is \nice}\}}(1-2\varepsilon)\, .
\]
\end{lemma}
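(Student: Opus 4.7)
My plan is first to observe that the indicator on the right-hand side makes the bound trivial on $\{\mbox{$z_i$ is not \nice}\}$, so the substantive claim is to establish $\P(\widetilde\cE^G_x\mid\cF_{i-1})\geq 1-2\varepsilon$ on the event $\{\mbox{$z_i$ is \nice}\}$. The structural input I would exploit is the measurability $\widetilde\cE^G_x\in\sigma(\widetilde\Gamma_L(x))$ from Lemma~\ref{lem:mu-Lambda-exp}: this reduces the question to comparing the conditional law of $\widetilde\Gamma_L(x)$ given $\cF_{i-1}$ with its marginal $\bP^{T_0}$-law, for which~\eqref{eq:Ax-leaf-x} already supplies the target bound of $1-2\varepsilon$.

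On $\{\mbox{$z_i$ is \nice}\}$, by definition no half-edge incident to a descendant of $z_i$ in $\partial T_0$ has been consumed during stages $1,\dots,i-1$, so the subtree of $G$ rooted at $z_i$ is wholly untouched by $\cF_{i-1}$. By the deferred-decisions property of the configuration model, conditionally on $\cF_{i-1}$ the matching of the residual half-edges is uniform, so the exposure of $\widetilde\Gamma_L(x)$ at stage $i$ is driven by exactly the same sampling rules as in the $\bP^{T_0}$-setting, with two small modifications: (a) the pool of available targets has been slightly depleted by the earlier matchings, and (b) there is an enlarged collection of possible ``collision-induced'' exit targets, namely the half-edges sitting in the previously exposed trees $\widetilde\Gamma_L(y)$ for $y\in V_{z_j}$, $j<i$. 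Using Property~\eqref{it-tree-size} together with the estimate $|\cS|\cdot|V_{z_i}|\leq \Delta^{K}=n^{o(1)}$ coming from our choice of $K$ and $\Delta$, both (a) and (b) involve only $n\exp(-(\tfrac14+o(1))\gamma_1\sqrt{\log n})=o(n)$ half-edges, a negligible fraction of the total.

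The main obstacle is then verifying that this perturbation does not spoil the four ingredients (i)--(iv) underpinning the proof of Lemma~\ref{lem:mu-Lambda-exp}. The truncation criterion in item (i) and the event $\Upsilon$ defining $\widetilde\cE^G_x$ in item (iv) are purely intrinsic to the tree $\widetilde\Gamma_L(x)$ and are hence unaffected by the conditioning; the bounds on the ``truncation'' and ``degrees'' components of item (iii) involve per-match probabilities that change only negligibly when the half-edge pool shrinks by a $o(1)$ fraction; and the new collision-induced exits from (b) above are absorbed into the existing ``cycles'' bookkeeping of item (iii), because the spread-out estimates on SRW on the truncated GW-tree already guarantee that the walk at time $t_0$ has vanishing probability to visit any prescribed subset of $o(n)$ vertices carrying mass $O(\Delta/n)$ each, exactly the same mechanism that controls the original cycle-hitting event in~\cite{BLPS}. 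Combining this $o(1)$ correction with~\eqref{eq:Ax-leaf-x} then yields $\P(\widetilde\cE^G_x\mid \cF_{i-1})\geq 1-2\varepsilon$ on $\{\mbox{$z_i$ is \nice}\}$ for all sufficiently large $n$, completing the proof.
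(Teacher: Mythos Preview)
Your proposal is correct and follows essentially the same route as the paper. Both arguments reduce to the $\{z_i\text{ is }\nice\}$ case, invoke the measurability $\widetilde\cE^G_x\in\sigma(\widetilde\Gamma_L(x))$, observe that on this event the half-edges at $V_{z_i}$ are still unmatched so the exposure of $\widetilde\Gamma_L(x)$ proceeds as under $\bP^{T_0}$ modulo a slightly depleted half-edge pool and extra collision targets of total size $n\exp(-c\gamma_1\sqrt{\log n})$, and then check that the three coupling impediments (truncation, cycles, degrees) together with the $\widetilde\tau$-event survive this perturbation; the paper phrases the collision bookkeeping as the exposed-subgraph bound increasing from $n\exp(-\tfrac14\gamma_1\sqrt{\log n})$ to $n\exp(-\tfrac15\gamma_1\sqrt{\log n})$, which is exactly your points (a)--(b).
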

\begin{proof}
The proof of Lemma~\ref{lem:exp-cond} closely parallels the proof of Lemma~\ref{lem:mu-Lambda-exp}. Indeed, all that remains to show is that having exposed the trees $\widetilde\Gamma_L(y)$ for $y\in V_{z_1}\cup\dots\cup V_{z_{i-1}}$ does not essentially change the probability of $\widetilde\cE_x$ for $x\in V_{z_i}$, at least when $z_i$ is \nice. First note that $\widetilde\cE_x\in\sigma\left(\widetilde\Gamma_L(x)\right)$ and that the tree $\widetilde\Gamma_L(x)$ does not intersect any of the trees $\widetilde\Gamma_L(y)$ for $y\in V_{z_1}\cup\dots\cup V_{z_{i-1}}$ (except for at $T_0$). Now, the only difference with the setting of Lemma~\ref{lem:mu-Lambda-exp} is that the number of exposed half-edges may now be larger. However, by the truncation criterion, we know that for all $y\in V_{z_1}\cup\dots\cup V_{z_{i-1}}$, the size of $\widetilde\Gamma_L(y)$ is smaller than $n\exp\left(-\frac{1}{4}\gamma_1\sqrt{\log n}\right)$, so that the total number of exposed half-edges at the end of stage $i-1$ is at most $\Delta^K n\exp\left(-\frac{1}{4}\gamma_1\sqrt{\log n}\right)\leq n\exp\left(-\frac{1}{5}\gamma_1\sqrt{\log n}\right)$. Recalling that the the three possible impediments to a successful coupling with a truncated GW-tree (\emph{truncation}, \emph{cycles} and \emph{degrees}) were controlled either by the truncation criterion itself (thus unchanged), or by the upper bound on the size of the exposed subgraph (which has merely increased from $n\exp\left(-\frac{1}{4}\gamma_1\sqrt{\log n}\right)$ to $n\exp\left(-\frac{1}{5}\gamma_1\sqrt{\log n}\right)$) and noticing that the event of a visit to $\partial B_{\lfloor K/2\rfloor }(x_0)$ before time $t_0$ is not affected by this extra-exposure, this is enough to ensures that, conditionally on $\cF_{i-1}$, for all $x\in V_{z_i}$ with $z_i$ \nice\,, the coupling of SRW on $G$ started at $x$ with SRW on a tree rooted at $x$ (containing $B_K(x_0)$ with truncated GW-trees on those leaves which are descendants of $z_i$) is successful with large probability. Also, the events $\Upsilon_1$ and $\Upsilon_3$ can be handled exactly as in the proof of Lemma~\ref{lem:mu-Lambda-exp}, a regeneration point below level $K$ being quickly found by the walk. 
\end{proof}

\begin{lemma}
Let $\epsilon>0$, take $t_0$ and $\gamma_1$ as in~\eqref{eq:t0}, and let $\mu_{x_0}=\P_{x_0}^G(X_{\tau_K}\in\cdot)$ and 
\begin{equation}\label{eq:U-def} 
U = \left\{ x \,:\; \widetilde\cE^G_x \mbox{ does not hold\,}\right\}\,.	
\end{equation}
If $T_0$ is a tree with $\P(B_K(x_0)=T_0)>0$ 
then for every sufficiently large $n$,
\[\bP^{T_0}\left(\mu_{x_0}(U)  \geq 4\epsilon \right) < 2 n^{-2}\,.
\]
\end{lemma}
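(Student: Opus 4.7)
The plan is to decompose $\mu_{x_0}(U)=\sum_{i=1}^{|\cS|} W_i$ with $W_i:=\sum_{x\in V_{z_i}}\mu_{x_0}(x)\ind_{\widetilde\cE_x^G\text{ fails}}$. Since $\{V_{z_i}\}_i$ partitions $\partial T_0$ and $\mu_{x_0}$ is $\cF_0$-measurable, each $W_i$ is $\cF_i$-measurable. Lemma~\ref{lem:exp-cond} then yields
\[
\bE\bigl[W_i\mid\cF_{i-1}\bigr]\leq \bigl(2\epsilon+\ind_{z_i\text{ not }\nice}\bigr)\mu_{x_0}(V_{z_i}),
\]
so $\sum_i \bE[W_i\mid\cF_{i-1}]\leq 2\epsilon+R_*$ with $R_*:=\sum_{z_i\text{ not }\nice}\mu_{x_0}(V_{z_i})$. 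I will then apply Azuma--Hoeffding to the martingale $M_j:=\sum_{i\leq j}(W_i-\bE[W_i\mid\cF_{i-1}])$, using Lemma~\ref{lem:number-reds} to control $R_*$.

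The crux, and the main obstacle, is a \emph{uniform} bound $\max_i\mu_{x_0}(V_{z_i})\leq 2^{-K/2}$ valid for every admissible $T_0$; without it, neither $R_*$ nor the Azuma variance $\sum_i\mu_{x_0}(V_{z_i})^2$ would reach the $n^{-2}$ target. The minimum-degree-$3$ hypothesis is what makes this possible. For any fixed $z_i\in\cS$, set $\tilde\rho_t:=\dist(X_t,z_i)$; at any vertex $v\neq z_i$, exactly one of the $\deg(v)\geq 3$ neighbors lies at distance $\tilde\rho_t-1$ and the remaining $\deg(v)-1\geq 2$ lie at distance $\tilde\rho_t+1$, so a direct computation gives
\[
\bE\bigl[2^{-\tilde\rho_{t+1}}\bigm| X_t=v\bigr]=\frac{\deg(v)+3}{2\deg(v)}\,2^{-\tilde\rho_t}\leq 2^{-\tilde\rho_t}.
\]
Thus $2^{-\tilde\rho_t}$ is a supermartingale; since any walk ending in $V_{z_i}$ must first pass through $z_i$, optional stopping at $\tau_{z_i}\wedge\tau_{\partial T_0}$ gives
\[
\mu_{x_0}(V_{z_i})\leq \P_{x_0}^{T_0}\bigl(\tau_{z_i}<\tau_{\partial T_0}\bigr)\leq 2^{-K/2}.
\]
Taking $\gamma_\star$ sufficiently large (say $\gamma_\star\geq 4/\log 2$) in the definition $K=\lceil\gamma_\star\log\log n\rceil$ makes this $\leq(\log n)^{-2}$.

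Combining these on the event $\cA:=\{\bR_L\leq 15\gamma_1^{-1}\sqrt{\log n}\}$, Lemma~\ref{lem:number-reds} gives $\bP^{T_0}(\cA^c)<n^{-2}$ and the number of non-$\nice$ $z_i$'s is at most $\bR_L$, so $R_*\leq\bR_L\cdot\max_i\mu_{x_0}(V_{z_i})\leq 15\gamma_1^{-1}(\log n)^{-3/2}=o(\epsilon)$, whence $\sum_i\bE[W_i\mid\cF_{i-1}]\leq 3\epsilon$ on $\cA$ for large $n$. The increments of $M_j$ satisfy $|M_j-M_{j-1}|\leq\mu_{x_0}(V_{z_j})$ with $\sum_j\mu_{x_0}(V_{z_j})^2\leq\max_j\mu_{x_0}(V_{z_j})\leq(\log n)^{-2}$, so Azuma--Hoeffding yields
\[
\bP^{T_0}\bigl(M_{|\cS|}\geq\epsilon\bigr)\leq 2\exp\bigl(-\epsilon^2(\log n)^2/2\bigr)<n^{-2}
\]
for $n$ large. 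Since on $\cA$ the inequality $\mu_{x_0}(U)\geq 4\epsilon$ forces $M_{|\cS|}\geq\epsilon$, a union bound gives
\[
\bP^{T_0}\bigl(\mu_{x_0}(U)\geq 4\epsilon\bigr)\leq \bP^{T_0}(\cA^c)+\bP^{T_0}\bigl(M_{|\cS|}\geq\epsilon\bigr)<2n^{-2},
\]
completing the proof.
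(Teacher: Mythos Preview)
Your proof is correct and follows essentially the same approach as the paper's: bound $\max_i \mu_{x_0}(V_{z_i})$ via the exponential martingale $2^{-\dist(X_t,z_i)}$ (the paper phrases this as the hitting probability of level $\lfloor K/2\rfloor$ in an infinite binary tree, obtaining $3\cdot 2^{-\lfloor K/2\rfloor}$), combine this with Lemma~\ref{lem:number-reds} to control the not-\nice\ contribution, and then apply Azuma--Hoeffding to the martingale $M_j=\sum_{i\leq j}(W_i-\E[W_i\mid\cF_{i-1}])$ using Lemma~\ref{lem:exp-cond}. The only structural difference is that the paper first splits $\mu_{x_0}(U)$ into a not-\nice\ sum and a \nice\ sum and runs the martingale argument on $W_i=\mu_{x_0}(V_{z_i}\cap U)\ind_{\{z_i\text{ \nice}\}}$, whereas you fold the not-\nice\ indicator into the conditional-expectation bound; both routes are equivalent. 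One cosmetic point: your displayed bound should read $2^{-\lfloor K/2\rfloor}$ rather than $2^{-K/2}$, since $\dist(x_0,z_i)=\lfloor K/2\rfloor$.
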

\begin{proof}
Note that
\begin{equation}
\label{eq:nice_not-nice}
\mu_{x_0}(U)\leq \sum_{i=1}^{|\cS|}\mu_{x_0}(V_{z_i})\ind_{\{\mbox{$z_i$ is not \nice}\}} + \sum_{i=1}^{|\cS|} \mu_{x_0}(V_{z_i}\cap U)\ind_{\{\mbox{$z_i$ is \nice}\}}\, .
\end{equation}
As for the first term in the right-hand side of \eqref{eq:nice_not-nice}, we have
\[
\sum_{i=1}^{|\cS|}\mu_{x_0}(V_{z_i})\ind_{\{\mbox{$z_i$ is not \nice}\}}\leq \max_{z\in \cS} \mu_{x_0}(V_{z})\bR_L\, .
\]
Observe that
\begin{equation}
\label{eq:max-mu-V_i}
\max_{z\in \cS} \mu_{x_0}(V_z) \leq \bP_{x_0}^{T_0} (\tau_z < \tau_{\partial T_0}) \leq 3\cdot 2^{-\lfloor K/2\rfloor}\,,
\end{equation}
as the probability of ever visiting a vertex at level $k$ in the infinite binary tree is at most $3 \cdot 2^{-k}$ (at each point along the path from the root to this vertex, the random walk has a probability of $1/3$ of escaping to infinity through an alternative branch), and adding edges can only decrease the probability of visiting a vertex. And by Lemma~\ref{lem:number-reds}, the number of \red\ vertices is smaller than $\frac{15}{\gamma_1}\sqrt{\log n}$ with probability at least $1-n^{-2}$. Choosing $\gamma_\star$ large enough in the definition of $K$ then ensures that
\[
 \max_{z\in \cS} \mu_{x_0}(V_{z})\bR_L \leq \varepsilon\, ,
 \]
 with probability at least $1-n^{-2}$.
 
 Moving on to the second term in the right-hand side of \eqref{eq:nice_not-nice}, let
\[W = \sum_{i=1}^{|\cS|} W_i\quad\mbox{ for }\quad W_i = \mu_{x_0}(V_{z_i}\cap U)\ind_{\{\mbox{$z_i$ is \nice}\}}\,,\]
and
\[M_t := \sum_{i\leq t}(W_i - \E[W_i\mid\cF_{i-1}])\qquad (t=1,\ldots,|\cS|)\,.\]
Note that, for all $1\leq i\leq |\cS|$, the variable $\sum_{j<i} W_j$  is $\cF_{i-1}$-measurable and 
\[
\E\left[ W_i\mid \cF_{i-1}\right]=\ind_{\{\mbox{$z_i$ is \nice}\}} \sum_{x\in V_{z_i}}\mu_{x_0}(x)\P\left( \big(\widetilde\cE^G_x\big)^{c} \mid \cF_{i-1}\right) \leq  2\varepsilon \mu_{x_0}(V_{z_i})\, ,
\]
with the last inequality  by Lemma~\ref{lem:exp-cond}.
In particular,    $W \leq M_{|\cS|} + 2\epsilon$. Also, by \eqref{eq:max-mu-V_i},
\[ \sum_{z\in\cS} \mu_{x_0}(V_{z})^2 \leq \max_{z\in\cS} \mu_{x_0}(V_z) \leq 3\cdot 2^{-\lfloor K/2\rfloor}\,,\]
and so $\sum_t\big|M_t - M_{t-1}\big|^2 \leq 3\cdot 2^{-\lfloor K/2\rfloor}$ with probability 1. 
Thus, we can infer from the Hoeffding--Azuma inequality for the martingale $(M_t)$ that
\[ \bP^{T_0}\left( W\geq  3\epsilon \right) \leq
 \bP^{T_0}\left( M_{|\cS|}\geq  \epsilon \right) \leq \exp\left(-\frac{ \epsilon^2}{6\cdot 2^{-\lfloor K/2\rfloor}}\right) < n^{-2}\,,\]
provided that $\gamma_\star$ in the definition~\eqref{eq:t1-K-def} is chosen to be sufficiently large.	
\end{proof}

Together with~\eqref{eq:gap-boost} this completes the proofs of Theorem~\ref{thm:mu(Bt)} and Proposition~\ref{prop:worst-case}.
\end{proof}

\section{Entropy comparison of walks on Galton--Watson trees}\label{sec:entropy}
In this section we prove Proposition~\ref{prop:nbrw-faster}, showing that the ratio $\bh_X / \bh_Y$ (which, as established in Proposition~\ref{prop:worst-case}, is the ratio between the cutoff locations for SRW and NBRW on our sparse random graphs) is at most some $c(Z) < 1$. Assume w.l.o.g.\ that $Z$ is non-constant ({\it i.e.}, $\P(Z \neq \E Z) > 0$), otherwise this ratio is $\frac{\E Z-1}{\E Z+1}$ as mentioned above.

Let $(T,\rho)$ be a rooted {\it Augmented} Galton--Watson tree
({\it i.e.}, the tree formed by joining the roots (one of which being $\rho$) of two i.i.d.\ Galton--Watson trees by an edge) with offspring variable $Z$. As before,
let $(X_t)$ and $(Y_t)$ be SRW and NBRW on $T$, resp.; as first observed in~\cite{lyons1995ergodic},  ($T,\rho,{\rm SRW}$) is a \emph{stationary environment}, {\it i.e.}, $(T,\rho) \stackrel{{\rm d}}=(T,\P_\rho(X_1\in\cdot))$.
Conditioned on $(T,\rho)$, let $H_t(T,\rho)$ be the entropy of SRW after $t$ steps:
\[
H_t(T,\rho)=H\Big(\P_\rho(X_t\in\cdot \mid T)\Big)\quad\mbox{ and }\quad h_t=\E[H_t(T,\rho)]\,,
\]
and similarly defined $L_t(T,\rho)$ for the NBRW by
\[
L_t(T,\rho)=H\Big(\P_\rho(Y_t\in\cdot \mid T)\Big)\quad\mbox{ and }\quad \ell_t=\E[L_t(T,\rho)]\,.
\]
With these notations, we have
\begin{equation}\label{eq:entropySRW_NBRW}
\bh_X=\lim_{t\to\infty}\frac{h_t}{t}\,,\quad\mbox{ and }
\bh_Y=\lim_{t\to\infty}\frac{\ell_t}{t}\,,
\end{equation}
where the identity for $\bh_X$ is by~\cite[Theorem 9.7]{lyons1995ergodic}, and $\ell_t$ ($t \geq 1$) is explicitly given by
\[ \ell_t=\E[\log(Z+1)] + (t-1) \E[\log Z] = \E[\log(Z+1)] + (t-1)\bh_Y\,.\]
Since $X_1$ and $Y_1$ have the same distribution, we further have
\[ h_1 = \ell_1 = \E[\log(Z+1)]\,.\]
We need the following result (cf., e.g., the proof of Theorem 3.2 in~\cite{benjamini2012ergodic} and Corollary~10 in~\cite{benjamini2015}), which was first observed in the case of random walks on groups by~\cite{kaimanovich1983random}. (Entropy of random walks on random stationary environments were thereafter studied in~\cite{kaimanovich1990boundary}). %, and it was recently shown in~\cite{benjamini2012ergodic} that on stationary environments $t\mapsto h_t$ is sub-additive.) 
We include the short proof for completeness.
\begin{claim}\label{clm:ht-nonincrease}
The map $t\mapsto (h_t -h_{t-1})$ is non-increasing.
\end{claim}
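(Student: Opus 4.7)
The plan is to use the Kaimanovich--Vershik style chain-rule argument for the entropy of a random walk on a stationary environment. The key ingredient already noted in the paper is that $(T,\rho,\mathrm{SRW})$ is a stationary environment, i.e.\ $(T,\rho)\stackrel{d}{=}(T,X_1)$ under the re-rooting at $X_1$. I will apply the chain rule for the joint entropy of $(X_1,X_{t+1})$ conditioned on $T$ in two different orders, and combine with the Markov property to rewrite $h_{t+1}-h_t$ in a form whose monotonicity is transparent.

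Concretely, conditioning on $T$ and using the chain rule twice,
\[
H(X_{t+1}\mid T)+H(X_1\mid X_{t+1},T)=H(X_{t+1},X_1\mid T)=H(X_1\mid T)+H(X_{t+1}\mid X_1,T).
\]
The Markov property gives $H(X_{t+1}\mid X_1=y,T)=H_t(T,y)$, where on the right $(T,y)$ denotes $T$ re-rooted at $y$. Taking expectations and invoking stationarity $(T,X_1)\stackrel{d}{=}(T,\rho)$, one obtains $\E[H(X_{t+1}\mid X_1,T)]=h_t$. Plugging in and rearranging,
\[
h_{t+1}-h_t \;=\; h_1 - \E\bigl[H(X_1\mid X_{t+1},T)\bigr].
\]

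It therefore suffices to show that $t\mapsto \E[H(X_1\mid X_{t+1},T)]$ is non-decreasing. This is the data-processing inequality applied to the Markov chain $X_1 \to X_t \to X_{t+1}$ (conditioned on $T$): since $X_{t+1}$ depends on $X_1$ only through $X_t$, we have $I(X_1;X_{t+1}\mid T)\le I(X_1;X_t\mid T)$, and subtracting from $H(X_1\mid T)$ yields $H(X_1\mid X_{t+1},T)\ge H(X_1\mid X_t,T)$ almost surely; taking expectations gives the desired monotonicity. Combining with the displayed identity yields $h_{t+1}-h_t\le h_t-h_{t-1}$, as claimed.

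The only delicate point is the correct invocation of stationarity to identify $\E[H(X_{t+1}\mid X_1,T)]$ with $h_t$; this uses that the augmented GW-tree seen from $X_1$ is distributed as the augmented GW-tree seen from $\rho$, which is exactly the content of the stationary-environment property of $(T,\rho,\mathrm{SRW})$ on the augmented GW-tree. The rest is entirely formal and does not require any properties of $Z$ beyond the finiteness ensuring that all entropies in sight are finite.
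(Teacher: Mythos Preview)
Your proof is correct and follows essentially the same approach as the paper's: both derive the identity $h_{t+1}-h_t = h_1 - \E[H(X_1\mid X_{t+1},T)]$ via the chain rule for $H(X_1,X_{t+1}\mid T)$ together with stationarity of the environment, and then use the Markov property $X_1\to X_t\to X_{t+1}$ to conclude that $\E[H(X_1\mid X_{t+1},T)]$ is non-decreasing in $t$. The only cosmetic differences are that the paper introduces the auxiliary notation $h_{1,t}=\E[H(X_1,X_t\mid T)]$ and phrases the final step as ``extra information cannot increase entropy'' rather than invoking the data-processing inequality by name.
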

\begin{proof}Consider the joint entropy of $X_1$ and $X_t$ given $T$:
\[
H_{1,t}(T,\rho):=H\Big(\Psubbig_\rho{(X_1,X_t)\in\cdot \mid T}\Big)\quad\mbox{ and }\quad h_{1,t}:=\E[H_{1,t}(T,\rho)]\, .\]
Factoring out $ \P_\rho(X_1=x \mid T) $ from $\P_\rho(X_1=x,X_t=y\mid T)$, one sees that
\[
H_{1,t}(T,\rho)=H_1(T,\rho)-\sum_{x\in T} \P_\rho(X_1=x\mid T)\sum_{y\in T}\P_x(X_{t-1}=y\mid T)\log \P_x(X_{t-1}=y \mid T)\, ,
\]
and taking expectation gives
\[
h_{1,t}=h_1+\E[H_{t-1}(T,\P_\rho(X_1\in\cdot))]=h_1+h_{t-1}\, ,
\]
where the last equality is due to the stationarity of the environment%, implying that $H_{t-1}(T,\P_\rho(X_1\in\cdot))$ has the same law as $H_{t-1}(T,\rho)$
. Therefore,
\begin{displaymath}
h_t-h_{t-1}=h_t-h_{1,t}+h_1=\E[H_t(T,\rho)-H_{1,t}(T,\rho)]+h_1\, .
\end{displaymath}
%Now $H_{1,t}(T,\rho)-H_t(T,\rho)$ can be seen as $H(X_1\mid X_t)$, the conditional entropy of $X_1$ given $X_t$ (and the tree $(T,\rho)$ being fixed).
Conditioned on $T$, the term $H_{1,t}(T,\rho)-H_t(T,\rho)$ is the conditional entropy $H(X_1\mid X_t$), which satisfies
$
H(X_1\mid X_t)=H(X_1\mid X_t,X_{t+1})\leq H(X_1\mid X_{t+1})
$,
since $X_1,X_{t+1}$ are conditionally independent given $X_t$, and extra information cannot increase entropy. So,
\begin{equation*}
h_t-h_{t-1}\geq  h_{t+1}-h_{1,t+1}+h_1=h_{t+1}-h_t\, .\qedhere
\end{equation*}
\end{proof}
The fact that $(h_t - h_{t-1})$ is non-increasing in $t$ implies that, for every $t$,
\[ h_t - h_2\leq \lceil \tfrac{t-2}2\rceil (h_3-h_1) \,,\]
from which we see (recalling~\eqref{eq:entropySRW_NBRW}) that it suffices to show that $h_3 - h_1 <  2 \bh_Y$ in order to conclude that $\bh_X = \lim_{t\to\infty} h_t/t < \bh_Y$. (Note that, while establishing the inequality $h_2 - h_1 <  \bh_Y$ would also suffice---and indeed that holds provided that $Z\geq 3$---it fails in general for $Z\geq 2$; {\it e.g.}, for $Z=2$ one has $h_2-h_1=\frac23\log 3 > \log 2 = \bh_Y$, and perturbing $Z$ to be $3$ with a suitably small probability leads to a similar behavior.) Towards establishing this, consider $H_3(T,\rho)$, the entropy of SRW after 3 steps on the tree $T$; by denoting  $T_k = \{ z\in T : \dist(\rho,z) = k\}$, one has that $H_3(T,\rho)=R+S$ where
\begin{align}
R&=-\sum_{z\in T_3}\P_\rho(X_3=z)\log\P_\rho(X_3=z)\, ,\label{eq-H3-R}\\
S&=-\sum_{x\in T_1}\P_\rho(X_3=x)\log \P_\rho(X_3=x)\, .\nonumber
\end{align}
Using the notation $y\prec x$ to denote that $y$ is a child of $x$ and $D_y$ to be the number of children of $y$ in $T$, one has
\begin{align*}
\E[ R \mid T_1, T_2, T_3 ] = \sum_{y\prec x\prec \rho}  D_y \frac{\log D_\rho +\log (D_x+1)+\log (D_y+1)}{D_\rho (D_x+1) (D_y+1)}
\end{align*}
(where each child of $y$ played the role of $z$ in~\eqref{eq-H3-R}, hence the factor $D_y$ above), thus
\begin{align*}
\E[ R \mid T_1, T_2 ] &= \frac{1}{D_\rho} \sum_{y\prec x\prec \rho } \frac1{D_x+1} \left(\E\Big[\frac{Z}{Z+1}\Big](\log D_\rho + \log (D_x+1)) + \E\Big[\frac{Z\log (Z+1)}{Z+1}\Big] \right)\\
&= \frac{1}{D_\rho} \sum_{x\prec \rho} \frac{D_x}{D_x+1} \left( \E\Big[\frac{Z}{Z+1}\Big]\left ( \log D_\rho+\log (D_x+1)\right) + \E\Big[\frac{Z \log (Z+1)}{Z+1}\Big]\right)\,.
\end{align*}
Continuing in the same manner, setting $\beta := \E\left[\frac{Z}{Z+1}\right]$ to simplify the notation, we have
\begin{align}
\E[R] &= \E\Big[ \E[ R \mid T_1] \Big]= \beta^2 \E\left[ \log D_\rho\right] +2 \beta \E\left[\frac{Z\log (Z+1)}{Z+1}\right]\nonumber\\
&=\beta^2 \E\left[ \log (Z+1)\right]  +2 \beta \E\Big[\frac{Z\log (Z+1)}{Z+1}\Big]\label{eq-H3-R-bound}
\end{align}
where the factor $1/D_\rho$ canceled the number of i.i.d.\ choices corresponding to $x\prec\rho$, and of the three summands corresponding to $x\prec \rho$ in the preceding display, each of the two that did not involve $D_\rho$ yielded the same expression after averaging over $T_2$.

Turning our attention to $S$, by the convexity of $x\mapsto x\log x$ and Jensen's inequality for conditional expectation,
\begin{align}\label{eq-H3-S-bound}\E[S\mid T_1 ]\leq -\sum_{x\in T_1}\E\big[\P_\rho(X_3=x)\mid T_1 \big]\log \E\big[\P_\rho(X_3=x)\mid T_1\big]\, .
\end{align}
For every $x\in T_1$, accounting for whether $X_2=\rho$ or $X_2=y$ for some $y\prec x$ shows that
\[\P_\rho(X_3=x)= \frac{1}{D_\rho}\bigg( \sum_{x'\in T_1}\frac{1}{D_\rho (D_{x'}+1)}+\sum_{y\prec x}\frac{1}{(D_x +1)(D_y+1)}\bigg)\,;
\]
thus, again using that $\E[ \sum_{y\prec x} \frac{1}{(D_x +1)(D_y+1)} \mid T_1,T_2] = \E[\frac{1}{Z+1}]\frac{D_x}{D_x+1}$ as reasoned above,
\[
\E\left[\P_\rho(X_3=x)\mid T_1 \right]=\frac{\E\Big[\frac{1}{Z+1}\Big]+ \E\Big[\frac{1}{Z+1}\Big]\E\Big[\frac{Z}{Z+1}\Big]}{D_\rho}=
\frac{1-\beta+(1-\beta)\beta}{D_\rho} = \frac{1-\beta^2}{D_\rho} \, .
\]
Plugging this in~\eqref{eq-H3-S-bound} shows that
\[ \E[S \mid T_1] \leq (1-\beta^2)\log\frac{D_\rho}{1-\beta^2}\,,\]
which, after taking an average over $T_1$ and combining it with~\eqref{eq-H3-R-bound}, yields
\begin{align}\label{eq-H3-R-S-bound}
h_3 = \E[R + S] \leq \E[\log(Z+1)] + 2\beta\E\left[\frac{Z\log(Z+1)}{Z+1}\right] - (1-\beta^2)\log(1-\beta^2)\,.
\end{align}
By Jensen's inequality and the fact that $Z$ is non-constant (thus the same holds also for $\frac{Z}{Z+1}$), we have  $\beta^2 < \E[(\frac{Z}{Z+1})^2]$,  hence
$\log(1-\beta^2) > \log(\E[1-(\frac{Z}{Z+1})^2])$,
and another application of Jensen's inequality to $x\mapsto \log x$ implies that
\begin{align*} (1-\beta^2)\log(1-\beta^2) &> (1-\beta)\E\left[\frac{2Z+1}{Z+1}\right] \E\left[ \log \left(\frac{2Z+1}{(Z+1)^2} \right)\right] \\
&\geq (1-\beta) \E\left[\frac{2Z+1}{Z+1} \log \left(\frac{2Z+1}{(Z+1)^2} \right)\right]
 \,,
\end{align*}
where the second inequality used that $f_1(z)=\frac{2z+1}{z+1} = 2-\frac1{z+1}$ is increasing whereas $f_2(z)=\log\left(\frac{2z+1}{(z+1)^2}\right) = \log(1-(\frac{z}{z+1})^2)$ is decreasing, implying---noting $\E[f_1(Z)^2]<\infty$ and $\E[f_2(Z)^2]<\infty$ since $Z \geq 1$---that $\cov(f_1(Z),f_2(Z)) \leq 0$.
Revisiting~\eqref{eq-H3-R-S-bound}, while recalling that $h_1 = \E[\log(Z+1)]$, we now infer that
\begin{align}
&h_3 - h_1 < 2 \beta \E\left[\frac{Z\log(Z+1)}{Z+1}\right] - (1-\beta)\E\left[\frac{2Z+1}{Z+1}\log\left(\frac{2Z+1}{(Z+1)^2}\right)\right] \nonumber\\
&= 2 \E\left[\frac{Z\log(Z+1)}{Z+1}\right] - (1-\beta)\E\left[\frac{2Z+1}{Z+1}\log\left(\frac{2Z+1}{(Z+1)^2}\right) + \frac{2Z\log(Z+1)}{Z+1}\right] \nonumber\\
&= 2\E\left[\frac{Z\log(Z+1)}{Z+1}\right] - (1-\beta)\E\left[f_3(Z)\right] \, ,
\label{eq-h3-h1}
\end{align}
where
\[ f_3(z) := \frac{2z+1}{z+1}\log(2z+1) - 2 \log(z+1)\,.\]
It is easy to verify that $f'_3(z) = (z+1)^{-2}\log(2z+1) > 0$ for every $z>0$, thus $f_3(z)$ is increasing and $\E[f_3(Z)^2]<\infty$ thanks to the facts $Z\geq 1$ and $\E Z < \infty$. Therefore, when considered with the function $f_4(z) = \frac{1}{z+1}$ which is decreasing and has a finite second moment, we have $\cov(f_3(Z),f_4(Z)) \leq 0$, or equivalently,
\begin{align*}
(1-\beta) \E\left[f_3(Z)\right] &\geq \E\left[\frac{f_3(Z)}{Z+1}\right] = \E\left[\frac{2Z+1}{(Z+1)^2}\log(2Z+1)-   \frac{2}{Z+1}\log(Z+1)\right] \, .
\end{align*}
Combining these with~\eqref{eq-h3-h1} yields
\[ h_3 - h_1 < 2\E\left[ \log(Z+1) -\frac{2Z+1}{2(Z+1)^2} \log(2Z+1)\right] \,.\]
The proof will thus be concluded by showing that the function
\[  g(x) := \log\left(\frac{x+1}x\right)- \frac{2x+1}{2(x+1)^2}\log(2x+1)\,.\]
satisfies $g(x) < 0$ for all $x\geq 2$, which would then imply that
\[ h_3-h_1 < 2\E[ \log Z] = 2\bh_Y = \ell_3-\ell_1\,.\]
To see this, first observe that
\[ g'(x) = (2x+1)(x+1)-\frac{x^2\log(2x+1)}{x(x+1)^3}\,.\]
Now, along the interval $[1,\infty)$, the function $x\mapsto\log(2x+1)$ increases from $0$ to $\infty$, whereas $x\mapsto 2+3x^{-1} + x^{-2}$ decreases from $6$ to $2$, thus $g'(x)$ has a unique root $x_0$ and is negative on $[1,x_0)$ and positive on $(x_0,\infty)$. Hence, it suffices to show that $g(x)$ is negative at $x=2$ and near $\infty$, which is indeed the case: $g(2)= \log(3/2)-\frac5{18}\log 5 < 0$ and $g(x) x/\log(x) \to -1$ as $x\to\infty$, as claimed.
\qed

\vspace{-5pt}
\subsection*{Acknowledgements} E.L. was supported in part by NSF grant DMS-1513403.
\vspace{-5.pt}
\bibliographystyle{abbrv}

\end{document}